\definecolor{verylight}{gray}{0.97}
\definecolor{light}{gray}{0.9}
\definecolor{medium}{gray}{0.85}
\definecolor{dark}{gray}{0.6}
 \def\NZQ{\mathbb}               % the font for N,Z,Q,R,C
 \def\ZZ{{\NZQ Z}}
 \def\FF{{\NZQ F}}
 \def\frk{\mathfrak}               % font for "Fraktur"
 \def\mm{{\frk m}}
 \def\nn{{\frk n}}
 \def\G{{\mathcal G}}
  \def\Mc{{\mathcal M}}
  \def\S{{\mathcal S}}
 \def\ab{{\mathbf a}}
 \def\opn#1#2{\def#1{\operatorname{#2}}} % to make operators
 \opn\chara{char} \opn\length{\ell} \opn\pd{pd} \opn\rk{rk}
 \opn\projdim{proj\,dim} \opn\injdim{inj\,dim} \opn\rank{rank}
 \opn\depth{depth} \opn\grade{grade} \opn\height{height}
 \opn\embdim{emb\,dim} \opn\codim{codim}
 \opn\Tr{Tr} \opn\bigrank{big\,rank}
 \opn\superheight{superheight}\opn\lcm{lcm}
 \opn\trdeg{tr\,deg}%\emph{
 \opn\reg{reg} \opn\lreg{lreg} \opn\ini{in} \opn\lpd{lpd}
 \opn\size{size} \opn\sdepth{sdepth}
 \opn\link{link}\opn\fdepth{fdepth}\opn\lex{lex}
 \opn\tr{tr}
 \opn\type{type}
 \opn\Borel{Borel}
 \opn\div{div} \opn\Div{Div} \opn\cl{cl} \opn\Cl{Cl}
 \opn\Spec{Spec} \opn\Supp{Supp} \opn\supp{supp} \opn\Sing{Sing}
 \opn\Ass{Ass} \opn\Min{Min}\opn\Mon{Mon}
 \opn\Ann{Ann} \opn\Rad{Rad} \opn\Soc{Soc}
 \opn\Im{Im} \opn\Ker{Ker} \opn\Coker{Coker} \opn\Am{Am}
 \opn\Hom{Hom} \opn\Tor{Tor} \opn\Ext{Ext} \opn\End{End}
 \opn\Aut{Aut} \opn\id{id}
 \opn\nat{nat}
 \opn\pff{pf}%   \pf exists already
 \opn\Pf{Pf} \opn\GL{GL} \opn\SL{SL} \opn\mod{mod} \opn\ord{ord}
 \opn\Gin{Gin} \opn\Hilb{Hilb}\opn\sort{sort}
 \opn\PF{PF}\opn\Ap{Ap}
 \opn\aff{aff} \opn
\opn\relint{relint} \opn\st{st}
 \opn\lk{lk} \opn\cn{cn} \opn\core{core} \opn\vol{vol}  \opn\inp{inp} \opn\nilpot{nilpot}
 \opn\link{link} \opn\star{star}\opn\lex{lex}\opn\set{set}
 \opn\width{wd}
 \opn\Fr{F}
 \opn\QF{QF}
 \opn\G{G}
 \opn\type{type}\opn\res{res}
 \opn\gr{gr}
 \def\pot#1#2{#1[\kern-0.28ex[#2]\kern-0.28ex]}
 \opn\dirlim{\underrightarrow{\lim}}
 \opn\inivlim{\underleftarrow{\lim}}
 \let\union=\cup
 \let\sect=\cap
 \let\iso=\cong
 \let\Dirsum=\bigoplus
 \def\Implies{\ifmmode\Longrightarrow \else
         \unskip${}\Longrightarrow{}$\ignorespaces\fi}
 \def\implies{\ifmmode\Rightarrow \else
         \unskip${}\Rightarrow{}$\ignorespaces\fi}
 \def\iff{\ifmmode\Longleftrightarrow \else
         \unskip${}\Longleftrightarrow{}$\ignorespaces\fi}
 \newtheorem{Theorem}{Theorem}[section]
 \newtheorem{Lemma}[Theorem]{Lemma}
 \newtheorem{Corollary}[Theorem]{Corollary}
 \newtheorem{Proposition}[Theorem]{Proposition}
 \newtheorem{Remarks}[Theorem]{Remarks}
 \newtheorem{Examples}[Theorem]{Examples}
 \let\epsilon\varepsilon
 \let\kappa=\varkappa
 \def\qed{\ifhmode\textqed\fi
       \ifmmode\ifinner\quad\qedsymbol\else\dispqed\fi\fi}
 \def\textqed{\unskip\nobreak\penalty50
        \hskip2em\hbox{}\nobreak\hfil\qedsymbol
        \parfillskip=0pt \finalhyphendemerits=0}
 \def\dispqed{\rlap{\qquad\qedsymbol}}
 \opn\dis{dis}
 \def\pnt{{\raise0.5mm\hbox{\large\bf.}}}
 \opn\Lex{Lex}
\begin{document}
%\linenumbers
\title {Freiman ideals}

\author {J\"urgen Herzog and  Guangjun Zhu$^{^*}$}

\address{J\"urgen Herzog, Fachbereich Mathematik, Universit\"at Duisburg-Essen, Campus Essen, 45117
Essen, Germany} \email{juergen.herzog@uni-essen.de}

\address{Guangjun Zhu, School of Mathematical Sciences, Soochow University,
 Suzhou 215006, P. R. China}\email{zhuguangjun@suda.edu.cn}

%\dedicatory{To my beloved friend }用来说明这篇文章献给谁的

\begin{abstract}
In this paper we study the Freiman inequality for the minimal number of generators of the square of an equigenerated  monomial ideal. Such an ideal is called a Freiman ideal if equality holds in the Freiman inequality. We classify all Freiman ideals of maximal  height, the Freiman ideals of certain classes of principal Borel ideals, the Hibi ideals which are Freiman, and classes of Veronese type ideals which are Freiman.
\end{abstract}

\thanks{The paper was written while the second author was visiting the Department of Mathematics of University
Duisburg-Essen. She want to express her thanks for the hospitality.\\
\hspace{5mm}* Corresponding author.}

\subjclass[2010]{Primary 13C99; Secondary 13H05, 13H10.}
%		13H10   	Special types (Cohen-Macaulay, Gorenstein, Buchsbaum, etc.)
%		13D02   	Syzygies, resolutions, complexes
%		05E40   	Combinatorial aspects of commutative algebra
%		16S36   	Ordinary and skew polynomial rings and semigroup rings

%		14M25   	Toric varieties, Newton polyhedra [See also 52B20]
%		13A02   	Graded rings
%		13F20   	Polynomial rings and ideals; rings of integer-valued polynomials
%		13A18   	Valuations and their generalizations
%		06A11   	Algebraic aspects of posets

\keywords{monomial ideals, powers of ideals, minimal number of generators, principal Borel ideals, Hibi ideals, ideals of  Veronese type}

\maketitle

\setcounter{tocdepth}{1}
%\tableofcontents

\section*{Introduction}

Let $I$ be an equigenerated monomial ideal with analytic spread $\ell(I)$. It has been shown in \cite[Theorem 1.9]{HMZ} that  $\mu(I^2)\geq l(I)\mu(I)-{l(I)\choose 2}.$ Here $\mu(J)$ denotes the minimal number of generators of a graded  ideal $J$. This inequality is the  consequence of a well-known theorem from additive number theory, due to Freiman \cite{F1}. It should be noted that the above lower bound for the  minimal number of the generators of  the square  of a monomial ideal is no longer valid, if $I$ is not equigenerated. Indeed,  for each $m\geq 6$ there exists a monomial ideal $I$ in two variables with $\mu(I)=m$  and $\mu(I^2)=9$, see \cite{EHM}.

We call an equigenerated  monomial ideal $I$ a {\em Freiman ideal} (or simply Freiman), if $\mu(I^2)= l(I)\mu(I)-{l(I)\choose 2}.$ It is the aim of this paper to classify all Freiman ideals within given families of equigenerated monomial ideals.

\medskip
In Section~1 we analyze the Freiman inequality from the view  point of  commutative algebra. Let $I$ be a graded  ideal in the polynomial ring $S=K[x_1,\ldots,x_n]$ over a field $K$. The fiber cone $F(I)$  of $I$ is the standard graded $K$-algebra $\Dirsum_{k\geq 0}I^k/\mm I^k$, where $\mm=(x_1,\ldots,x_n)$ is the graded maximal ideal of $S$.  The Hilbert series of $F(I)$ is of the form $Q(t)/(1-t)^{\ell(I)}$,  where $Q(t)=\sum_{i\geq 0}h_it^i$ is a polynomial with integer coefficients $h_i$. It has been noticed in \cite[Corollary 2.6]{HMZ} the surprising fact,  that, as a consequence of  the Freiman inequality,   one has $h_2\geq 0$ for any equigenerated monomial ideal. Moreover, $h_2=0$ if and only if $I$ is a Freiman ideal.

We do not know whether the fiber cone of any Freiman ideal is Cohen--Macaulay. For all families of Freiman ideals considered in this paper,  the fiber cone of a Freiman ideal is  Cohen--Macaulay.

Assuming the fiber cone of a given equigenerated monomial ideal $I$ is Cohen--Macaulay, it is not hard to show (see Proposition~\ref{hyper} and  Corollary~\ref{powerFreiman}) that the following conditions are equivalent:
\begin{enumerate}
\item[(a)] $I$ is Freiman.
\item[(b)] $F(I)$ has minimal multiplicity.
\item[(c)] There exists an integer $k\geq 2$ such that  $$\mu(I^{k})={\ell(I)+k-2\choose k-1}\mu(I)-(k-1){\ell(I)+k-2\choose k}.$$
\item[(d)]  For all integer $k\geq 1$ one has  $\mu(I^{k})={\ell(I)+k-2\choose k-1}\mu(I)-(k-1){\ell(I)+k-2\choose k}$.
\end{enumerate}

\medskip
Moreover,  if $I$ is Freiman and $F(I)$ is Cohen--Macaulay, then  the reduction number of $I$ is one, and  $I$ is level. If, in addition, we assume that $I$ is  equimultiple, that is, if $\ell(I)=\height(I)$, then   these  properties actually characterize Freiman ideals, see Corollary~\ref{level} and Corollary~\ref{fishsticks}.

\medskip
In Section~\ref{sec2} we give a complete characterization of Freiman ideals of height $n$  in the polynomial ring  $S=K[x_1,\ldots, x_n]$ with $n\geq 2$, where $K$ is a field. In Theorem~\ref{Thm3} it is first observed that if $I\subset S$ is an equigenerated monomial ideal of height $n$ which is generated in degree $d$,  then $I$ is Freiman if and only if $I^2=(x_1^d,\ldots,x_n^d)I$.  This result is then used to prove Theorem~\ref{pseudo}, which is the main theorem of this section. In order to formulate this main result, we need the  following definition. Let $I\subset S$ be a monomial ideal. The unique minimal set of monomial generators of $I$ is denoted by $G(I)$. Let $G(I)=\{u_1,\ldots,u_m\}$ and let $q\geq 1$ be  an integer. We denote by $I^{[q]}$ the monomial ideal with $G(I^{[q]})=(u_1^q,\ldots,u_m^q)$. The ideal $I^{[q]}$ is called the {\em $q$th pseudo-Frobenius power} of $I$. It is obvious that $I$ is Freiman if and only if $I^{[q]}$ is Freiman.  Thus,  in the classification of Freiman ideals it suffices to consider monomial ideals which are not proper pseudo-Frobenius powers of other monomial ideals. With this concept and notation introduced,  we are ready to present Theorem~\ref{pseudo}, which says the following:  Let $n, d\geq 2$   be two integers and  $I\subset K[x_1,\ldots,x_n]$  an equigenerated monomial ideal of height $n$  generated in  degree $d$. Suppose that   $I$ is not a proper pseudo-Frobenius power of another monomial ideal.   Then $I$ is Freiman, if and only if,  up to a relabeling of the variables,  $I=(x_1,\ldots,x_r)^d+(x_{r+1}^d,\ldots,x_n^d)$ with $r\leq \min\{3,n\}$ if $d=2$,  and $r\leq \min\{2,n\}$ if $d\geq 3$.  In the case that $d=1$, it follows that  $I=(x_1,\dots, x_n)$. This ideal is obviously Freiman.

In Corollary~\ref{product} we discuss the question in which cases the product of two equigenerated monomial ideals $I$ and $J$ of height $n$ in $K[x_1,\ldots,x_n]$ is a Freiman ideal. It turns out that this is never the case, if $n\geq 4$, while if $n=3$, the product $IJ$ is Freiman if and only if $I=J=(x_1^d,x_2^d,x_3^d)$ for some integer $d\geq 1$.

\medskip
In Section~\ref{sec3} we study classes of monomial ideals which naturally appear in combinatorics and geometry, and ask which of them are Freiman. The classes of ideals considered here are the principal Borel ideals, the Hibi ideals and the ideals of Veronese type. For a principal Borel ideal $I$  with Borel generator $u=x_ix_j$  with $i\leq j$, it is shown in Theorem~\ref{Borel}(a) that  $I$ is Freiman, if and only if $j\leq 3$, or $j>3$ and $i\leq 2$.
For principal Borel ideals of degree $d\geq 3$ we only have very partial results. The reason is that checking the Freiman condition for such ideals leads to difficult numerical problems, which at this moment, we are not able to handle. May be there exists another approach to these problems which we are aware of at present. Actually we expect that if $u$ is a monomial of degree $d\geq 3$ such that $x_1$ does not divide $u$, and $I$ is the ideal whose Borel generator is $u$, then $I$ is Freiman if and only if $u=x_2^{d-1}x_j$ and $2\leq j$. The ``if" part of this expected result is shown in Theorem~\ref{Borel}(b), and both directions for $n=3$ are shown in Theorem~\ref{Thm4}.

For Hibi ideals we have a complete answer. Recall that if $P$ is a finite poset and $\mathcal{I}(P)$ is the set of poset ideals of $P$, then the {\em Hibi ideal} $H_P$
is the monomial ideal in the polynomial ring  $S=K[\{x_p,y_p\}_{p\in P}]$,  whose generators are the monomials  $u_{I}=(\prod\limits_{p\in \mathcal{I}(P)}x_p)(\prod\limits_{p\in P\setminus \mathcal{I}(P)}y_p)$ with $I\in \mathcal{I}(P)$. It is shown in Theorem~\ref{hibifreiman} that
$H_P$ is  Freiman  if and only if there exists $p\in P$ such that $P\setminus\{p\}$ is a chain. A typical example of such a poset, but not the only one, is a poset consisting of the chain plus an extra element which is incomparable with all the elements of this chain.

For Veronese type ideals we give a complete answer in three cases. Given positive integers  $n$, $d$,  and a sequence $\ab$ of integers $1\leq a_1\leq a_2\leq \cdots \leq a_n\leq d$ with $\sum_{i=1}^na_i>d$, one defines the monomial ideal  $I_{\ab,d}\subset S=K[x_1,\ldots,x_n]$ with
\[
G(I_{\ab,d})=\{x_1^{b_1}x_2^{b_2}\cdots x_n^{b_n}\; \mid \; \sum_{i=1}^nb_i=d \text{ and  $b_i\leq a_i$ for $i=1,\ldots,n$}\}.
\]
Ideals of this form are called {\em ideals of Veronese type}.
If $a_i=d$ for $i=1,\ldots,n$ then $I_{\ab,d}=(x_1,\ldots,x_n)^d$, and $(x_1,\ldots,x_n)^d$ is Freiman if and only if  $d=1$,  $n\leq 2$, or $n= 3$ and $d=2$, see  Theorem~\ref{lexsegment}.

If $a_i=1$ for $i=1,\ldots,n$, then $I_{\ab,d}$ is the so-called squarefree Veronese ideal which, as is common,  we also denote by $I_{n,d}$. In this case $I_{n,d}$ is  Freiman  if and only if  $d=1$ or  $d=n-1$, see Theorem~\ref{Veronese2}. For the proof we use the result of De Negri and Hibi which says that the fiber cone of a Veronese type ideal is Cohen--Macaulay, and that $I_{n,d}$ is level if and only if $d=1$, $d=n-1$, or $d\geq 2$ and $n=2d-1$, $n=2d$ or $n=2d+1$. Thus we may apply our result from Section~\ref{sec1} which says that Freiman ideals whose fiber cone is Cohen--Macaualy must be level, and it remains to check only these cases.

Katzman \cite{Ka} determined the multiplicity of the fiber cone of a Veronese type ideal. The formula is rather complicated, but in the case that $a_i=d-1$ for $i=1,\ldots,n$, the formula simplifies and one has $e(F(I_{\ab,d}))=d^{n-1}-n$. Now by using the fact, shown in Section~\ref{sec1}, that Freiman ideals, whose fiber cone is Cohen--Macaulay, must have minimal multiplicity, we show in Theorem~\ref{Veronese3} that $I_{\ab,d}$ is Freiman if and only if $n=2$, or $n=3$ and  $d=2$.

The results of this paper show that Freiman ideals are rather rare. Thus one can hope that a classification of all principal Borel Freiman ideals and all Veronese type Freiman ideals is possible. We are not so optimistic that Freiman ideals in general can be classified. The Hibi ideals which are Freiman show that  it would be hard to find a general pattern for all Freiman ideals.

\medskip

\section{The Freiman inequality}
\label{sec1}

Let $K$ be a field and $S=K[x_1,\ldots,x_n]$ the polynomial ring in $n$ indeterminates over $K$, and  let $I\subset S$ be a graded ideal.  The minimal number of generators of $I$ will be denoted by $\mu(I)$. The  ideal $I$ is called {\em equigenerated},  if all elements  of a minimal set of generators of  $I$ have the same degree.

An ideal  $J\subseteq I$  is  called  a {\it reduction} of  $I$ if $I^{k+1}=JI^{k}$
 for some nonnegative integer $k$.  The
{\it reduction number} of $I$ with respect to $J$ is defined to be   $$r_{J}(I)=\min\{k \mid I^{k+1}=JI^{k}\}.$$
A reduction $J$ of $I$ is called a {\em minimal reduction}  if  it does not properly contain any other  reduction of $I$.  The {\it reduction number} of $I$ is defined to be the number  $$r(I)=\min\{ r_{J}(I)\mid J\ \mbox{ is a minimal reduction of}\ I\}.$$

By Northcott and Rees \cite{NR} it is known that  $\mu(J)=\ell(I)$ for  every
minimal reduction $J$ of $I$, if $K$ is infinite. Here $\ell(I)$ denotes  the {\it analytic spread} of $I$, which  is defined to be the Krull dimension of the fiber cone $F(I)=\Dirsum_{k\geq 0} I^k/\mm I^k$.

It is known  that $\height(I)\leq \ell(I) \leq \min\{\dim(S),\mu(I)\}$. The ideal  $I$ is called  {\it equimultiple},  if $\height(I)=l(I)$.

\medskip
A well-known theorem of Freiman \cite{F1} implies the following result \cite{HMZ}:

\begin{Theorem}
\label{freiman}
Let $I\subset S$ be an equigenerated monomial ideal. Then $$\mu(I^2)\geq l(I)\mu(I)-{l(I)\choose 2}.$$
\end{Theorem}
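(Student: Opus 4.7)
The plan is to translate the algebraic statement into a combinatorial inequality about sumsets of lattice points, and then invoke Freiman's original theorem as a black box.

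First I would encode the minimal generators of $I$ as lattice points. Write $G(I)=\{u_1,\ldots,u_m\}$ with $u_i=x_1^{a_{i1}}\cdots x_n^{a_{in}}$, and set $A=\{a_1,\ldots,a_m\}\subset\ZZ^n$. Then $\mu(I)=|A|$. Because $I$ is equigenerated, say in degree $d$, every $a_i$ lies on the affine hyperplane $\sum_j v_j=d$. The elements of $G(I^2)$ are precisely the distinct monomials $u_iu_j$, and these correspond bijectively to the elements of the sumset $A+A$; hence $\mu(I^2)=|A+A|$.

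Next I would identify the analytic spread with an affine dimension. For an equigenerated monomial ideal the fiber cone $F(I)=\bigoplus_{k\geq 0}I^k/\mm I^k$ is isomorphic, as a standard graded $K$-algebra, to the semigroup ring $K[\NN A]$, i.e.\ the $K$-subalgebra of $K[x_1,\ldots,x_n]$ generated by $u_1,\ldots,u_m$. Its Krull dimension equals the rank of the group $\ZZ A$. Since all $a_i$ lie in the hyperplane of constant coordinate sum $d$, this rank equals $\dim_{\mathrm{aff}}(A)+1$. Setting $s=\ell(I)-1$, we conclude that the affine dimension of $A$ over $\QQ$ is exactly $s$.

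With these identifications in hand, the theorem reduces to the following classical inequality of Freiman: if $A$ is a finite subset of $\ZZ^n$ whose affine dimension over $\QQ$ equals $s$, then
$$|A+A|\geq (s+1)|A|-\binom{s+1}{2}.$$
Substituting $|A|=\mu(I)$, $|A+A|=\mu(I^2)$, and $s+1=\ell(I)$ yields precisely
$$\mu(I^2)\geq \ell(I)\mu(I)-\binom{\ell(I)}{2}.$$
The nontrivial content is Freiman's inequality itself (which we use as a black box from \cite{F1}); the main obstacle on the algebraic side is the careful identification in the second step, since one has to check that the analytic spread $\ell(I)$ coincides with $\dim_{\mathrm{aff}}(A)+1$ and not merely with $\dim_{\mathrm{aff}}(A)$, a point where the equigeneration hypothesis is essential.
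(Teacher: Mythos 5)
Your argument is correct and reproduces essentially the approach that the paper defers to by citing \cite{HMZ}: the identifications $\mu(I^2)=|A+A|$ and $\ell(I)=\dim_{\mathrm{aff}}(A)+1$ (the latter via $F(I)\iso K[\NN A]$ and the fact that $A$ lies on a hyperplane of constant coordinate sum $d>0$, which is where equigeneration enters) reduce the statement to Freiman's sumset inequality for finite subsets of $\ZZ^n$. The paper itself gives no proof, only the citation to \cite{HMZ} for this reduction and to \cite{F1} for the underlying combinatorial theorem.
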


It is of interest to  know when we have equality in the above theorem. To have a short name, we say that an equigenerated monomial ideal $I$ is a {\em Freiman ideal} (or simply {\em Freiman}), if $\mu(I^2)=\ell(I)\mu(I)-{\ell(I)\choose 2}$. For simplicity we set $$\Delta(I)=\mu(I^2)-\ell(I)\mu(I)+{\ell(I)\choose 2}$$
for any graded ideal $I\subset S$.

\begin{Remarks}
\label{useful}
{\em
(a) Let $\Hilb_{F(I)}(t)= Q(t)/(1-t)^{\ell(I)}$ with $Q(t)=1+h_1t +h_2t^2+\cdots$ be the Hilbert series of the fiber cone $F(I)$ of $I$, where $I\subset S$ is a graded ideal of $S$.
In \cite{HMZ} it has been noticed that $\Delta(I)=h_2$. Thus an equigenerated monomial ideal is Freiman  if and only if $h_2=0$.

(b) Let $J\subset K[x_1,\ldots,x_r]$ with $r<n$  be an equigenerated monomial ideal, and let $I=JS$. Then $I$ is Freiman if and only if $J$ is Freiman. Indeed, $\mu(I)=\mu(J)$ and $\mu(I^2)=\mu(J^2)$.
}
\end{Remarks}

\medskip
Let $R$ be a standard graded $K$-algebra. We denote by $e(R)$ the {\em multiplicity} of $R$ and by $\embdim(R)$ the {\em embedding dimension} of $R$.
By Abhyankar \cite{A} it is known that
\[
\embdim(R)\leq e(R)+\dim R-1,
\]
if $R$ is a domain. The same inequality holds if $R$ is Cohen--Macaulay \cite{Sa}. The $K$-algebra $R$ is said to have {\em minimal multiplicity} if
$\embdim(R)= e(R)+\dim R-1.$

\begin{Proposition}
\label{hyper}
Let $I$ be an equigenerated monomial ideal. Suppose that $F(I)$ is Cohen--Macaulay.  Then $I$ is  Freiman, if and only if  $F(I)$ has minimal multiplicity. This is for example the case, if $F(I)$ is a hypersurface ring defined by a quadratic polynomial.
\end{Proposition}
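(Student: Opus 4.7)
The plan is to translate everything into numerical information about the $h$-vector of $F(I)$ and then use the key observation from Remark~\ref{useful}(a) that the Freiman condition is exactly $h_2 = 0$.

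First I would read off the basic invariants of $F(I)$ from its Hilbert series $\Hilb_{F(I)}(t) = Q(t)/(1-t)^{\ell(I)}$ with $Q(t) = 1 + h_1 t + h_2 t^2 + \cdots$. Expanding, one gets $\dim_K F(I)_1 = h_1 + \ell(I)$, but $\dim_K F(I)_1 = \mu(I)$, so $h_1 = \mu(I) - \ell(I)$; also $\dim F(I) = \ell(I)$ and $e(F(I)) = Q(1) = \sum_{i\geq 0} h_i$ and $\embdim F(I) = \mu(I)$. Hence the condition that $F(I)$ have minimal multiplicity, namely $\embdim F(I) = e(F(I)) + \dim F(I) - 1$, rewrites as
\[
  \sum_{i \geq 0} h_i \;=\; 1 + h_1.
\]

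Next I would exploit the Cohen--Macaulay hypothesis. Passing to the Artinian reduction $F(I)/(\theta_1,\ldots,\theta_{\ell(I)})$ by a linear system of parameters, the Hilbert function of this quotient equals the $h$-vector, so in particular all $h_i$ are nonnegative. Thus $\sum h_i = 1 + h_1$ holds if and only if $h_i = 0$ for every $i \geq 2$. Combining this with Remark~\ref{useful}(a), which says $I$ is Freiman iff $h_2 = 0$, the only nontrivial implication left is that $h_2 = 0$ forces $h_i = 0$ for all $i \geq 2$. But in the Artinian reduction $R$, which is a standard graded $K$-algebra, $R_2 = R_1 \cdot R_1$; if $R_2 = 0$ then by induction $R_n = R_1 \cdot R_{n-1} = 0$ for all $n \geq 2$. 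I expect this to be the only step with any substance, and it is essentially a one-line consequence of standard generation.

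Finally, for the ``for example'' clause, I would simply compute the Hilbert series when $F(I)$ is a hypersurface ring defined by a quadric: $F(I) \cong K[y_1,\ldots,y_m]/(f)$ with $\deg f = 2$ gives
\[
  \Hilb_{F(I)}(t) \;=\; \frac{1 - t^2}{(1-t)^m} \;=\; \frac{1+t}{(1-t)^{m-1}},
\]
so $Q(t) = 1 + t$ and $h_2 = 0$. Alternatively, $e(F(I)) = 2 = 1 + h_1$ directly exhibits minimal multiplicity, so the main equivalence applies and gives that $I$ is Freiman. No step here looks hard; the whole proposition is an unpacking of the $h$-vector identity $\Delta(I) = h_2$ together with the combinatorics of Hilbert functions of Cohen--Macaulay standard graded algebras.
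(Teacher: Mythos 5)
Your proof is correct and follows essentially the same route as the paper: read off $h_1 = \mu(I) - \ell(I)$ and $e(F(I)) = \sum h_i$ from the Hilbert series, use Cohen--Macaulayness to get nonnegativity of the $h$-vector (hence minimal multiplicity $\iff$ $h_i = 0$ for $i \geq 2$), and close the loop with the Artinian-reduction observation that $h_2 = 0$ forces $h_i = 0$ for all $i \geq 2$ in the CM case. You supply slightly more detail (e.g.\ the explicit generation argument $R_2 = 0 \Rightarrow R_n = 0$ and the hypersurface computation) than the paper's terse version, but the substance is identical.
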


\begin{proof}
The Hilbert series of the fiber cone $F(I)$ of $I$ is of the form
\[
\Hilb_{F(I)}(t)=\sum\limits_{k\geq 0}\mu(I^{k})t^{k}=\frac{1+h_{1}t+h_{2}t^2+h_{3}t^3+h_{4}t^4+\cdots}{(1-t)^{\ell(I)}}.
\]
Since $F(I)$ is Cohen--Macaulay, it follows that $\sum_{i\ge 2}h_i\ge 0$ and that $$h_1=\embdim F(I)-\dim F(I).$$  It follows that $F(I)$ has minimal multiplicity if and only if $h_i=0$ for all $i\ge 2$. For a Cohen--Macaulay ring we have $h_2=0$ if and only if $h_i=0$ for all $i\ge 2$. This yields the desired result.
\end{proof}

\begin{Examples}
\label{okgoogle}
{\em Let $I=(x^2,y^2,z^3,xy)$, $J=(x^2,y^2,z^2,xy,xz)$ and $L=(x,y,z)^2$. Then $I$ is  Freiman, and  $F(I)$   is a hypersurface ring defined by a quadric.   $J$ is not Freiman, because $\ell(J)=3$, and $13=\mu(J^2)>3\mu(J)-3=12$. Finally,  $L$ is  Freiman, and  $F(L)$ is Cohen--Macaulay with minimal multiplicity.

\medskip
In Theorem~\ref{lexsegment} it is shown  for $n\ge 3$, the ideal $(x_1,\ldots, x_n)^m$ is Freiman if and only if  $m=1$, $n\leq 2$, or $n=3$ and $m=2$. }
\end{Examples}

Let  $T=K[z_1,\ldots,z_m]$ be a polynomial ring over a field $K$,  $J\subset T$ a graded ideal with graded minimal free $T$-resolution $\FF$. The ideal $J$ is said to have a $d$-linear resolution if $F_i=T(-d-i)^{\beta_i}$ for all $i$, and $T/J$ is said to be {\em level}, if $T/J$ is Cohen--Macaulay,   and the last module in the graded free resolution of $J$ is generated in a single degree.

\begin{Corollary}
\label{level}
Let $I\subset S$ be a Freiman ideal, and suppose that $F(I)$ is Cohen--Macaulay. Then the defining ideal of $F(I)$ has a $2$-linear resolution and $F(I)$ is level.
\end{Corollary}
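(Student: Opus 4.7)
The plan is to combine Proposition~\ref{hyper} with the well-known structure theorem for Cohen--Macaulay standard graded $K$-algebras of minimal multiplicity. Since $I$ is Freiman and $F(I)$ is Cohen--Macaulay, Proposition~\ref{hyper} tells us that $F(I)$ has minimal multiplicity. After possibly extending the base field (which affects none of the invariants in question), I may assume $K$ is infinite and pick a linear system of parameters $\theta_1,\dots,\theta_d$ for $F(I)$, where $d=\ell(I)$. Since $F(I)$ is Cohen--Macaulay, the $\theta_i$ form a regular sequence on $F(I)$, and the Artinian reduction $\overline{F(I)}=F(I)/(\theta_1,\dots,\theta_d)$ has Hilbert series $1+h_1t$, with $h_1=\mu(I)-\ell(I)$.

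Next I would observe that an Artinian standard graded $K$-algebra with Hilbert series $1+h_1t$ is nothing but $T'/\mm'^{\,2}$, where $T'=K[z_1,\dots,z_{h_1+1}]$ and $\mm'=(z_1,\dots,z_{h_1+1})$. Writing $F(I)=T/J$ with $T=K[z_1,\dots,z_m]$, $m=\mu(I)$, I can lift $\theta_1,\dots,\theta_d$ to a linear regular sequence on $T$, so that $\overline{F(I)}=T'/J'$, where $T'=T/(\theta_1,\dots,\theta_d)$ and $J'$ is the image of $J$. By the above, $J'=\mm'^{\,2}$. The ideal $\mm'^{\,2}$ has a $2$-linear resolution (the Eagon--Northcott or Koszul complex on $z_1,\dots,z_{h_1+1}$ provides it), so the graded Betti numbers $\beta_{i,j}^{T'}(T'/J')$ vanish unless $j=i+1$ or $(i,j)=(0,0)$.

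Since $\theta_1,\dots,\theta_d$ is a regular sequence on $F(I)$ consisting of linear forms, tensoring the minimal graded free resolution of $F(I)$ over $T$ with $T/(\theta_1,\dots,\theta_d)$ yields the minimal graded free resolution of $\overline{F(I)}$ over $T'$. Consequently $\beta_{i,j}^{T}(F(I))=\beta_{i,j}^{T'}(\overline{F(I)})$, so $\beta_{i,j}^{T}(T/J)$ vanishes unless $j=i+1$ or $(i,j)=(0,0)$. Translating this back to $J$ itself gives $\beta_{i,j}^T(J)=0$ unless $j=i+2$, which is exactly the statement that $J$ has a $2$-linear resolution. For the level property, let $p=\projdim_{T}(T/J)$; by the above, $\beta_{p,j}^{T}(T/J)$ vanishes except at $j=p+1$, so the last module in the minimal free resolution of $T/J$ is a direct sum of copies of $T(-p-1)$, i.e.\ generated in a single degree, which is precisely what it means for $F(I)$ to be level.

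The argument is essentially a bookkeeping exercise once the minimal multiplicity assertion is in hand; the only mildly delicate step is verifying that the Artinian reduction of a Cohen--Macaulay standard graded $K$-algebra of minimal multiplicity is forced to equal $T'/\mm'^{\,2}$, but this follows immediately from its Hilbert series $1+h_1t$, since any generating set of the degree $1$ part yields the embedding dimension and all degree $\geq 2$ monomials in the generators must vanish.
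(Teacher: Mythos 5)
Your proof follows essentially the same route as the paper: invoke Proposition~\ref{hyper} to get minimal multiplicity, pass to the Artinian reduction modulo a linear regular sequence, observe that the reduction has Hilbert series $1+h_1t$ (equivalently $\bar{\nn}^2=0$) and hence is level with a $2$-linear resolution, and then lift back since reduction by a regular sequence of linear forms preserves the graded Betti numbers. The extra detail you supply about the Eagon--Northcott/Koszul resolution of $\mm'^2$ and the explicit Betti-number bookkeeping is correct and just makes explicit what the paper leaves implicit.
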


\begin{proof}
We may assume that $K$ is an infinite field. Then there exists a maximal regular sequence $y_1,\ldots, y_{\ell(I)}$ of linear forms in $F(I)$. Let $\overline{F(I)}=F(I)/(y_1,\ldots, y_{\ell(I)})F(I)$. Since $F(I)$ and $\overline{F(I)}$  have the same multiplicity, and since by Proposition~\ref{hyper},  $F(I)$ has minimal multiplicity, it follows that $\bar{\nn}^2=0$, where $\bar{\nn}$ denotes the graded maximal ideal of $\overline{F(I)}$. This implies that $\overline{F(I)}$  is level and has a $2$-linear resolution. Since $\overline{F(I)}$ is  obtained from $F(I)$ by reduction modulo a regular sequence of linear forms, the desired result follows.
\end{proof}

\medskip
In the following theorem we consider more generally when for a graded ideal $I$ we have $\Delta(I)=0$.

\begin{Theorem}
\label{equivalent}
Let $I\subset S$ be a graded ideal.
\begin{enumerate}
\item[ (a)] If  $\Delta(I)=0$ and $F(I)$ is Cohen--Macaulay, then $r(I)=1$.
\item[(b)] If $I$ is   equimultiple
and $r(I)=1$, then $\Delta(I)=0$ and $F(I)$ is Cohen--Macaulay.
\end{enumerate}
\end{Theorem}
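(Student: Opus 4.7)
My plan is to link the two hypotheses in each direction through the same object: a minimal reduction $J=(f_1,\ldots,f_{\ell(I)})$ of $I$ together with the images $y_1,\ldots,y_{\ell(I)}\in F(I)_1$ of its generators, and the quotient $\overline{F(I)}=F(I)/(y_1,\ldots,y_{\ell(I)})F(I)$. Recall from Remark~\ref{useful}(a) that $\Delta(I)=h_2$ in $\Hilb_{F(I)}(t)=Q(t)/(1-t)^{\ell(I)}$, $Q(t)=\sum_i h_it^i$. Throughout I assume $K$ is infinite, since the faithfully flat base change $K\to K(t)$ preserves $F(I)$ up to Hilbert series, Cohen--Macaulayness of $F(I)$, and the reduction number $r(I)$.

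\emph{Part (a).} Assume $F(I)$ is Cohen--Macaulay and $h_2=\Delta(I)=0$. The argument in the proof of Proposition~\ref{hyper} upgrades this to $h_i=0$ for all $i\ge 2$, so $F(I)$ has minimal multiplicity. Cohen--Macaulayness together with the infinite base field lets me choose a regular sequence of linear forms $y_1,\ldots,y_{\ell(I)}\in F(I)_1$, and any lift $f_1,\ldots,f_{\ell(I)}\in I$ generates a minimal reduction $J$ of $I$ by the standard criterion (the minimal reductions are exactly those ideals whose generators map to a system of parameters of the fiber cone). The quotient $\overline{F(I)}$ then has Hilbert polynomial $1+h_1t$, so in particular
\[
\overline{F(I)}_2 \;=\; I^2/(\mm I^2+JI) \;=\; 0,
\]
and Nakayama's lemma yields $I^2=JI$, hence $r(I)\le 1$.

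\emph{Part (b).} Conversely, assume $I$ is equimultiple with $r(I)=1$ and pick a minimal reduction $J=(f_1,\ldots,f_{\ell(I)})$ satisfying $I^2=JI$. The equality $\mu(J)=\height(J)=\ell(I)$ forces $f_1,\ldots,f_{\ell(I)}$ to be an $S$-regular sequence, so $J$ is a complete intersection. Iterating $I^2=JI$ gives $I^k=JI^{k-1}$ for all $k\ge 2$, whence $J\cap I^k=J\cap JI^{k-1}=JI^{k-1}$. By the Valabrega--Valla criterion the initial forms $f_i^\ast\in\gr_I(S)_1$ form a regular sequence on $\gr_I(S)$, and I then argue that the same input---namely that $F(I)$ is generated in degrees $\le 1$ as a module over the polynomial subring $F(J)=K[y_1,\ldots,y_{\ell(I)}]$, since $F(I)_k=F(J)_{k-1}\cdot F(I)_1$ for $k\ge 1$---forces $y_1,\ldots,y_{\ell(I)}$ to be a regular sequence on $F(I)$, so $F(I)$ is Cohen--Macaulay. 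Computing the $h$-vector from $\overline{F(I)}$, in degree $k\ge 2$
\[
\overline{F(I)}_k \;=\; I^k/(\mm I^k+JI^{k-1}) \;=\; I^k/(\mm I^k+I^k) \;=\; 0,
\]
so $Q(t)=1+h_1t$, and in particular $\Delta(I)=h_2=0$.

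The main obstacle is the Cohen--Macaulayness step in part (b). Valabrega--Valla handles $\gr_I(S)$ cleanly, but transferring the regular sequence from $\gr_I(S)$ down to $F(I)=\gr_I(S)/\mm\gr_I(S)$ is precisely where the equimultiple hypothesis $\ell(I)=\height(I)$ is essential, via the complete intersection property of $J$ that makes $F(J)$ a polynomial ring over which $F(I)$ turns out to be free. Everything else is Hilbert series bookkeeping together with a single application of Nakayama's lemma.
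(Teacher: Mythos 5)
Your part~(a) is correct and follows the same route as the paper: upgrade $h_2=0$ to $h_i=0$ for $i\geq 2$ via Cohen--Macaulayness, cut by a maximal regular sequence of linear forms in $F(I)_1$, read off $\overline{F(I)}_2 = I^2/(\mm I^2+JI)=0$, and finish with Nakayama.

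Part~(b) has a genuine gap. You correctly set up the minimal reduction $J$, use equimultiplicity to conclude $J$ is a complete intersection, iterate $I^k=JI^{k-1}$, and invoke Valabrega--Valla to get that $f_1^\ast,\ldots,f_\ell^\ast$ is a regular sequence on $\gr_I(S)$. But the transition to $F(I)$ is where you stop arguing and start asserting. You claim that because $F(I)_k = F(J)_{k-1}\cdot F(I)_1$ for $k\geq 1$, the images $y_1,\ldots,y_\ell$ must form a regular sequence on $F(I)$. That implication is false as a general statement about graded rings: take $R=K[a,b]/(ab,b^2)$ with $\deg a=\deg b=1$ and the subring $K[a]$. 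One has $R_k = K[a]_{k-1}\cdot R_1$ for every $k\geq 1$ (since $a^{k-1}b=0$ already for $k\geq 2$), yet $a$ is a zerodivisor on $R$ and $\depth R=0<1=\dim R$. So ``generated in degrees $\le 1$ over the parameter subring'' does not force the parameters to be regular; some further structural input from the equimultiple/reduction-number-one hypothesis must be used, and you have not supplied it. Note also that $F(I)=\gr_I(S)/(\mm/I)\gr_I(S)$ is a quotient of $\gr_I(S)$ by a degree-zero ideal, not by a regular sequence of positive-degree forms, so Cohen--Macaulayness of $\gr_I(S)$ does not simply descend to $F(I)$.

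The paper avoids all of this by citing Shah's theorem (Corollary~1(a) of \cite{S1}), which is precisely the statement that an equimultiple ideal with reduction number one has Cohen--Macaulay fiber cone. Once that is invoked, the $h$-vector bookkeeping you perform at the end of part~(b) is correct and matches the paper. So the diagnosis is: your part~(a) is fine; in part~(b) you identified ``the main obstacle'' accurately, but then papered over it rather than closing it. Either reproduce Shah's argument in the equimultiple case or cite it; as written, the Cohen--Macaulayness of $F(I)$ is not established.
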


\begin{proof}
(a) Without loss of generality we may assume that  $K$ is infinite. We set $\ell=\ell(I)$. The case $\ell=0$ is  trivial.   So we may assume that $\ell\geq 1$.   Since $F(I)$ is Cohen--Macaulay, we can choose  an $F(I)$-regular sequence $y_{1},\ldots,y_{\ell}\in F(I)_{1}$, where
 $y_{i}=f_{i}+\mm I$ with $f_i\in I$.

Let $\Hilb_{F(I)}(t)=Q_{F(I)}(t)/(1-t)^\ell$ with  $\deg Q(t)=q$ and $Q_{F(I)}(t)=\sum_{i=0}^qh_it^i$.  Since we assume that $F(I)$ is Cohen--Macaulay, it follows that $h_i>0$  for $i=0,\ldots,q$. By Remark~\ref{useful}(a), $\Delta(I)=h_{2}$.
Thus our assumption implies that $h_2=0$, and hence $h_i=0$ for all $i\geq 2$.

 Let $J=(f_{1},\ldots, f_{\ell})$ and  $\overline {F(I)}=F(I)/(J+\frak{m}I)F(I)$. Since $y_{1},\ldots,y_{\ell}$ is a regular sequence on $F(I)$, it follows that $Q_{F(I)}(t)=Q_{\overline {F(I)}}(t)$. This implies that $\overline {F(I)}_{i}=0$ for $i\geq 2$. Equivalently, $$I^{2}=JI+\frak{m}I^{2}.$$
 Thus,  Nakayama's lemma yields $I^{2}=JI$, as desired.

 (b) By Shah \cite[Corollary 1(a)]{S1},  our assumptions imply that $F(I)$ is Cohen--Macaulay. Let $J=(f_1,\ldots,f_{\ell})$ with $f_i$ as in part (a). Then the elements $y_i=f_i+\mm I$ in $F(I)_1$ form a regular sequence and $\overline{F(I)}_2=0$, where $\overline{F(I)}=F(I)/(y_1,\ldots,y_\ell)F(I)$. Since $F(I)$ and $\overline {F(I)}$ have  the same $h$-vector, it follows that $h_2=\bar{h}_2=0$. Thus $\Delta(I)=0$.
\end{proof}

\begin{Corollary}
\label{fishsticks}
Let $I$ be an equigenerated monomial ideal.
\begin{enumerate}
\item[(a)] If $I$ is  Freiman  and $F(I)$ is Cohen--Macaulay, then $r(I)=1$.
\item[(b)] If $I$ is equimultiple and $r(I)=1$, then $I$ is  Freiman  and $F(I)$ is Cohen--Macaulay.
\end{enumerate}
\end{Corollary}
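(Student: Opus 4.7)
The plan is to reduce Corollary~\ref{fishsticks} directly to Theorem~\ref{equivalent}. The key bridge is the definitional observation that, for an equigenerated monomial ideal $I$, being Freiman is the same as saying $\Delta(I)=0$: indeed, unwinding the definition, $\mu(I^2)=\ell(I)\mu(I)-\binom{\ell(I)}{2}$ is literally the equation $\Delta(I)=0$. So the corollary is obtained from the theorem by substituting this equivalent formulation of the Freiman condition into both parts.

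For part (a), I would assume that $I$ is Freiman with $F(I)$ Cohen--Macaulay. The Freiman hypothesis translates into $\Delta(I)=0$, so Theorem~\ref{equivalent}(a) applies and gives $r(I)=1$.

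For part (b), I would assume $I$ is equimultiple with $r(I)=1$. Theorem~\ref{equivalent}(b) then yields both $\Delta(I)=0$ and Cohen--Macaulayness of $F(I)$. Since $I$ is an equigenerated monomial ideal, the equation $\Delta(I)=0$ is exactly the statement that $I$ is Freiman, completing the proof.

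There is no serious obstacle here: the content is entirely in Theorem~\ref{equivalent}, and the corollary just specializes the conclusion ``$\Delta(I)=0$'' to ``$I$ is Freiman'' under the extra hypothesis that $I$ is an equigenerated monomial ideal, which is what gives the $\Delta$-formulation its combinatorial meaning. The only thing to be careful about is citing Remark~\ref{useful}(a) (or simply the definition of Freiman and of $\Delta(I)$) to make the translation between the two formulations explicit.
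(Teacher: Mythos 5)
Your proposal is correct and matches the paper exactly: Corollary~\ref{fishsticks} is stated without a separate proof because it is precisely Theorem~\ref{equivalent} restated via the definitional equivalence (Remark~\ref{useful}(a)) that, for an equigenerated monomial ideal, ``$I$ is Freiman'' means ``$\Delta(I)=0$.'' Your parts (a) and (b) unwind this in the same way the paper intends.
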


In the next result we consider the minimial number of generators of the powers of a graded ideal.

\begin{Proposition} \label{zhu} Let  $I\subset S$ be a graded  ideal with   analytic spread $l=\ell(I)$. Then
\begin{enumerate}
\item[(a)]  $\mu(I^{k})\geq {l+k-2\choose k-1}\mu(I)-(k-1){l+k-2\choose k}$ for all $k\geq 1$, if  $h_{i}\geq 0$ for all $i\geq 2$.

\item[(b)] $\mu(I^{k})={l+k-2\choose k-1}\mu(I)-(k-1){l+k-2\choose k}$ for all $k\geq 1$, if and only if  $h_{i}=0$ for all $i\geq 2$.

\item[(c)] Assume  in addition that $F(I)$ is Cohen--Macaulay. Then the following conditions are equivalent:
\begin{enumerate}
\item[(i)] $\Delta(I)=0$.
\item[(ii)]  $\mu(I^{k})= {l+k-2\choose k-1}\mu(I)-(k-1){l+k-2\choose k}$ for all $k\geq 1$.
\item[(iii)]  $\mu(I^{k})= {l+k-2\choose k-1}\mu(I)-(k-1){l+k-2\choose k}$ for some $k\geq 2$.
\end{enumerate}
\end{enumerate}
\end{Proposition}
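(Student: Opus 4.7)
The whole argument rests on the coefficient formula obtained by multiplying $Q(t)=\sum_{i\ge 0}h_i t^i$ by the expansion $(1-t)^{-l}=\sum_{j\ge 0}\binom{l+j-1}{j}t^j$, namely
$$\mu(I^k)\;=\;\sum_{i=0}^{k} h_i\binom{l+k-i-1}{k-i}.$$
In particular $h_0=1$, and reading off the coefficient of $t$ gives $h_1=\mu(I)-l$. The identity driving every case of the proposition is
$$\binom{l+k-1}{k}+(\mu(I)-l)\binom{l+k-2}{k-1}\;=\;\binom{l+k-2}{k-1}\mu(I)-(k-1)\binom{l+k-2}{k},$$
which, after cancelling $\mu(I)\binom{l+k-2}{k-1}$ and applying Pascal's rule to $\binom{l+k-1}{k}$, reduces to the elementary relation $k\binom{l+k-2}{k}=(l-1)\binom{l+k-2}{k-1}$.

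For part (a), I would split the Hilbert-series expansion into the $i=0,1$ terms plus the tail $\sum_{i=2}^{k}h_i\binom{l+k-i-1}{k-i}$. The hypothesis $h_i\ge 0$ for $i\ge 2$ lets me discard the tail, leaving exactly the left-hand side of the identity above, which rewrites as the asserted lower bound for $\mu(I^k)$. Part (b) is then immediate: for the \emph{if} direction, $h_i=0$ for $i\ge 2$ collapses the tail to $0$ and converts the inequality of (a) into the claimed equality. For the \emph{only if} direction, comparing the assumed closed form with the series expansion forces $\sum_{i=2}^{k} h_i\binom{l+k-i-1}{k-i}=0$ for every $k\ge 2$; taking $k=2$ gives $h_2=0$, and a short induction on $k$ (each new equation determines $h_k$ from the already killed earlier terms, since the binomial coefficient at $i=k$ equals $1$) yields $h_k=0$ for all $k\ge 2$.

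For part (c), the Cohen--Macaulay hypothesis guarantees $h_i\ge 0$ for all $i\ge 0$, so part (a) applies. I would prove (i) $\Rightarrow$ (ii) by combining $\Delta(I)=h_2=0$ with the Cohen--Macaulay fact recorded in the proof of Proposition~\ref{hyper}, namely that in this setting $h_2=0$ forces $h_i=0$ for all $i\ge 2$; then part (b) yields (ii). The implication (ii) $\Rightarrow$ (iii) is trivial, taking any $k\ge 2$. For (iii) $\Rightarrow$ (i), the equality for the chosen $k\ge 2$ inserted into the expansion of $\mu(I^k)$ produces $\sum_{i=2}^{k}h_i\binom{l+k-i-1}{k-i}=0$; since each $h_i\ge 0$ by Cohen--Macaulayness and each binomial coefficient is a positive integer (using $l\ge 1$, the only non-trivial case), every summand vanishes individually, in particular $h_2=0$, which is (i).

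There is no serious obstacle here: the entire proposition is a bookkeeping exercise around the single combinatorial identity displayed in the first paragraph, together with the standard Cohen--Macaulay fact already quoted in the paper. The only point that needs a moment's care is the induction step in the \emph{only if} direction of (b), where one has to be sure that each new constraint pins down a single new $h_k$; this is indeed the case because the coefficient of $h_k$ in the $k$-th constraint is $\binom{l-1}{0}=1$.
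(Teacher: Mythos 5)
Your proposal is correct and follows essentially the same route as the paper: expand $\mu(I^k)$ as $\sum_{i=0}^k h_i\binom{l+k-i-1}{k-i}$, isolate the $i=0,1$ contribution via the binomial identity $k\binom{l+k-2}{k}=(l-1)\binom{l+k-2}{k-1}$ to produce the closed form, and control the tail $\sum_{i\ge 2}h_i\binom{l+k-i-1}{k-i}$ by sign considerations or vanishing. The one place you diverge is the ``only if'' half of (b): the paper sums the assumed formula into a closed generating function $\frac{1+(\mu(I)-\ell)t}{(1-t)^\ell}$ and compares with $Q(t)/(1-t)^\ell$, whereas you run a term-by-term induction, noting that the coefficient of $h_k$ in the $k$-th constraint is $\binom{l-1}{0}=1$ so each new equation kills a single new $h_k$. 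Both are valid bookkeeping around the same identity; yours is slightly more elementary, and your explicit positivity observation in (iii)\,$\Rightarrow$\,(i) (each $\binom{l+k-i-1}{k-i}>0$ for $l\ge 1$) makes a step that the paper leaves implicit.
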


\begin{proof} (a) For the fiber cone of the ideal $I$, we have
\begin{eqnarray*}
\Hilb_{F(I)}(t)&=&\sum\limits_{k\geq 0}\mu(I^{k})t^{k}=\frac{1+h_{1}t+h_{2}t^2+h_{3}t^3+h_{4}t^4+\cdots}{(1-t)^{\ell}}
\end{eqnarray*}
\begin{eqnarray*}
&=&(1+h_{1}t+h_{2}t^2+\cdots)(1+\ell t+{\ell+1\choose 2}t^2+{\ell+2\choose 3}t^3+\cdots)\\
&=&1+(h_{1}+\ell)t+(h_{1}\ell+{\ell+1\choose 2}+h_{2})t^2+\cdots\\
&+&[{\ell+k-1\choose k}+\sum\limits_{i=1}^{k}{\ell+k-i-1\choose k-i}h_{i}]t^k+\cdots.
\end{eqnarray*}
It follows  that
 $$\mu(I)=h_{1}+\ell,\ \mbox{ and}\  \ \mu(I^2)=h_{1}\ell+{\ell+1\choose 2}+h_{2}=\ell\mu(I)-{\ell\choose 2}+h_{2}.$$
Moreover,  for all $k\geq 3$ we have
\begin{eqnarray*}\mu(I^k)&=&{\ell+k-1\choose k}+{\ell+k-2\choose k-1}h_{1}+{\ell+k-3\choose k-2}h_{2}+\cdots+{\ell\choose 1}h_{k-1}+h_{k}\\
&=&{\ell+k-1\choose k}+{\ell+k-2\choose k-1}(\mu(I)-\ell)+\sum\limits_{i=2}^{k}{\ell+k-i-1\choose k-i}h_{i}   \\
&=&{\ell+k-2\choose k-1}\mu(I)-(\ell-1){\ell+k-2\choose k}+\sum\limits_{i=2}^{k}{\ell+k-i-1\choose k-i}h_{i}.
\end{eqnarray*}
Since   $h_{i}\geq 0$ for all $i\geq 2$, it follows that $\sum\limits_{i=2}^{k}{\ell+k-i-1\choose k-i}h_{i}\geq 0$ for all $i\geq 2$. This yields the desired  conclusion.

(b) The proof of (a) shows that $\mu(I^{k})={\ell+k-2\choose k-1}\mu(I)-(k-1){\ell+k-2\choose k}$ for all $k\geq 1$,
if  $h_{i}=0$ for all $i\geq 2$.

Conversely, suppose that $\mu(I^{k})={\ell+k-2\choose k-1}\mu(I)-(k-1){\ell+k-2\choose k}$ for all $k\geq 1$. Then
\begin{eqnarray*}
\Hilb_{F(I)}(t)&=&1+\sum\limits_{k\geq 1}[{\ell+k-2\choose k-1}\mu(I)-(k-1){\ell+k-2\choose k}]t^k\\
&=&\frac{1+[\mu(I)-\ell]t}{(1-t)^{\ell}}.
\end{eqnarray*}
This shows that $h_{i}=0$ for all $i\geq 2$.

(c) (i)\implies (ii): If $\Delta(I)=0$, then $h_2=0$.  Since $F(I)$ is Cohen--Macaulay, it follows that $h_i=0$ for $i\geq 2$. Therefore, the assertion follows from (b).

(ii)\implies (iii): is trivial.

(iii)\implies (i):  Let $k\geq 2$. By the proof of (a) we know that
\[
 \mu(I^k)- [{\ell+k-2\choose k-1}\mu(I)-(\ell-1){\ell+k-2\choose k}]=\sum\limits_{i=2}^{k}{\ell+k-i-1\choose k-i}h_{i}.
 \]
 Therefore, (iii) implies that $\sum\limits_{i=2}^{k}{\ell+k-i-1\choose k-i}h_{i}=0$. In particular, $h_2=0$  and hence $\Delta(I)=0$.
\end{proof}

\begin{Corollary}
\label{powerFreiman}
{\em Proposition~\ref{zhu} implies that an equigenerated monomial ideal $I$ is Freiman if and only if
$\mu(I^{k})= {l+k-2\choose k-1}\mu(I)-(k-1){l+k-2\choose k}$ for all or just some   $k\geq 2$.}
\end{Corollary}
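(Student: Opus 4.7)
The plan is to deduce the equivalences directly from Proposition~\ref{zhu}, using Remark~\ref{useful}(a) to translate between the Freiman condition and the $h$-vector of $F(I)$. First, I would observe that the $k=1$ instance of the displayed formula is the tautology $\mu(I)=\mu(I)$, so ``the formula holds for all $k\geq 2$'' is the same condition as ``the formula holds for all $k\geq 1$''. By Proposition~\ref{zhu}(b), the latter is equivalent to $h_i=0$ for every $i\geq 2$, which in particular forces $h_2=0$; in view of Remark~\ref{useful}(a), this says $\Delta(I)=0$, i.e.\ $I$ is Freiman.

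Second, the case $k=2$ of the displayed formula reads $\mu(I^2)=\ell(I)\mu(I)-\binom{\ell(I)}{2}$, which is literally the defining equality of a Freiman ideal. Hence ``$I$ is Freiman'' coincides with ``the formula holds for $k=2$'', and this is trivially a special case of ``the formula holds for some $k\geq 2$''. Combined with the previous paragraph, this closes the cycle Freiman $\Rightarrow$ ``for some'' $\Rightarrow$ ``for all'' $\Rightarrow$ Freiman, once the remaining link is established.

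The point demanding care is that ``formula for some $k\geq 2$'' upgrades to ``formula for all $k\geq 2$''. For this I would invoke the Hilbert-series identity extracted in the course of proving Proposition~\ref{zhu}(a), namely
\[
\mu(I^{k})-\Big[\binom{\ell+k-2}{k-1}\mu(I)-(k-1)\binom{\ell+k-2}{k}\Big]=\sum_{i=2}^{k}\binom{\ell+k-i-1}{k-i}h_{i}.
\]
Vanishing of the left-hand side for some $k_{0}\geq 2$ yields a single linear relation with positive binomial coefficients among $h_2,\ldots,h_{k_0}$. Provided the $h_i$ are known to be nonnegative for $i\geq 2$ (exactly the hypothesis built into Proposition~\ref{zhu}(a), and the only nontrivial ingredient needed here), each individual $h_i$ must vanish, so the ``for some'' version does upgrade to the ``for all'' version. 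I expect this nonnegativity step to be the main conceptual obstacle; once it is granted the corollary is immediate.
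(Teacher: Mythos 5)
Your chain Freiman $\Rightarrow$ ``for some'' $\Rightarrow$ ``for all'' $\Rightarrow$ Freiman has a genuine gap in the middle link. From the identity
\[
\mu(I^{k_0})-\Bigl[\binom{\ell+k_0-2}{k_0-1}\mu(I)-(k_0-1)\binom{\ell+k_0-2}{k_0}\Bigr]=\sum_{i=2}^{k_0}\binom{\ell+k_0-i-1}{k_0-i}h_{i}
\]
together with $h_i\geq 0$, you can indeed conclude $h_2=\cdots=h_{k_0}=0$. But by Proposition~\ref{zhu}(b), ``the formula holds for all $k\geq 2$'' is equivalent to $h_i=0$ for \emph{every} $i\geq 2$, and nothing in your argument rules out $h_i\neq 0$ for some $i>k_0$. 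So ``for some'' does not upgrade to ``for all'' from the identity and nonnegativity alone; the passage you call ``immediate'' is exactly where the proof must do real work.

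What is missing is the Cohen--Macaulay hypothesis on $F(I)$, which is the standing hypothesis in Proposition~\ref{zhu}(c) and in the introduction's presentation of precisely this equivalence. For a Cohen--Macaulay standard graded algebra, $h_2=0$ forces $h_i=0$ for all $i\geq 2$: reducing modulo a maximal regular sequence of linear forms preserves the $h$-vector, and for the resulting Artinian ring $h_2=0$ says the square of the graded maximal ideal vanishes, hence so do all higher graded pieces (this is the observation in the proof of Proposition~\ref{hyper}). The Cohen--Macaulay hypothesis is also the source of the nonnegativity $h_i\geq 0$ that you flagged as ``the only nontrivial ingredient''; it is not automatic for an arbitrary equigenerated monomial ideal, where the Freiman inequality only guarantees $h_2\geq 0$. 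With Cohen--Macaulayness restored, the corollary is simply Proposition~\ref{zhu}(c) with condition (i), $\Delta(I)=0$, translated via Remark~\ref{useful}(a) into ``$I$ is Freiman'' --- which is all the paper means by its one-line ``Proposition~\ref{zhu} implies''.
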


\medskip

\medskip

\section{Freiman ideals of maximal height}
\label{sec2}

The following result gives a characterization of Freiman ideals of maximal height.

\begin{Theorem}
\label{Thm3}
Let $I\subset K[x_1,\ldots,x_n]$ be an equigenerated  monomial ideal generated in  degree $d$ with  $\height I=n$. Then  $I$ is  Freiman  if and only if $I^2=JI$,  where $J=(x_1^d,\ldots,x_n^d)$.
If the equivalent conditions hold, then $F(I)$ is Cohen--Macaulay.
\end{Theorem}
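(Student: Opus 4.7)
The plan is to exploit the fact that the height-$n$ condition forces the ``coordinate'' ideal $J=(x_1^d,\ldots,x_n^d)$ to be contained in $I$, and then to show that $\mu(JI)$ can be computed by direct enumeration of generators. Comparing $\mu(JI)$ with $\mu(I^2)$ then reduces the Freiman condition to an equality between monomial ideals generated in a single degree.

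First I would check that $x_i^d\in G(I)$ for every $i$. Since $\height I=n$, the ideal $I$ is $\mm$-primary, so some power of each $x_i$ belongs to $I$; hence some minimal generator of $I$ divides a pure power of $x_i$, and any such divisor is itself a pure power of $x_i$ of degree $d$, namely $x_i^d$. Thus $J\subseteq I$. Moreover, from $n=\height I\leq \ell(I)\leq n$ we see $\ell(I)=n$ and $\mu(J)=n=\ell(I)$, so $J$ is a candidate minimal reduction.

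The key step is to compute $\mu(JI)$. The generating set $\{x_i^d u : 1\leq i\leq n,\ u\in G(I)\}$ consists entirely of monomials of degree $2d$, so no proper divisibility relation can occur among them. The only coincidences arise as follows: if $x_i^d u = x_j^d v$ with $(i,u)\neq (j,v)$, then necessarily $i\neq j$ (else $u=v$), and comparing the $x_i$-exponent gives $\nu_i(v)\geq d$, which together with $\deg v = d$ forces $v=x_i^d$, and symmetrically $u=x_j^d$. So the coincidences correspond exactly to unordered pairs $\{i,j\}$, giving $\binom{n}{2}$ identifications. Hence
\[
\mu(JI)=n\mu(I)-\binom{n}{2}.
\]

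Now $JI\subseteq I^2$ and both ideals are generated in the single degree $2d$; consequently $\mu(JI)\leq \mu(I^2)$ with equality if and only if $JI=I^2$. Since $\ell(I)=n$, the Freiman condition reads $\mu(I^2)=n\mu(I)-\binom{n}{2}=\mu(JI)$, and the desired equivalence follows. For the Cohen--Macaulay assertion, $I^2=JI$ says that $J$ is a minimal reduction of $I$ with $r_J(I)\leq 1$; together with the equimultiplicity $\height I=n=\ell(I)$, Corollary~\ref{fishsticks}(b) yields that $F(I)$ is Cohen--Macaulay (the degenerate case $I=J$ being immediate, as $F(I)$ is then a polynomial ring). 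I do not expect a serious obstacle; the entire argument is bookkeeping, the only delicate point being the exponent-comparison step which pins down the coincidences to exactly the pairs $\{x_i^d,x_j^d\}$ and so makes the count match the Freiman bound.
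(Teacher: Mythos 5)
Your proof is correct, and the underlying idea is the same as the paper's: compute $\mu(JI)$, identify it with the Freiman lower bound $n\mu(I)-{n\choose 2}$, and deduce the equivalence from the containment $JI\subseteq I^2$ with both ideals generated in the single degree $2d$. Two differences in execution are worth flagging. You justify the inclusion $J\subseteq I$ (and hence $\ell(I)=n$ and that each $x_i^d$ is a minimal generator) directly from $\height I=n$, whereas the paper silently builds this into its description of $G(I)$; it is a hypothesis worth surfacing as you do. More substantively, your single collision count over the list $\{x_i^d u : i\in[n],\, u\in G(I)\}$ — where you check that the only coincidences are $x_i^d\cdot x_j^d=x_j^d\cdot x_i^d$ for $i\neq j$, contributing exactly ${n\choose 2}$ identifications — gives $\mu(JI)=n\mu(I)-{n\choose 2}$ unconditionally, and this yields both implications of the equivalence at once. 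The paper instead splits $JI=J^2+JL$ (with $L$ the non-coordinate part of $I$), computes the pieces, and for the direction $I^2=JI\implies$ Freiman falls back on Theorem~\ref{equivalent}(b), which rests on Shah's Cohen--Macaulayness theorem. Your arrangement keeps the equivalence itself purely combinatorial and reserves Corollary~\ref{fishsticks}(b) for the one place it is genuinely needed, the Cohen--Macaulayness of $F(I)$; and there you correctly note the degenerate case $I=J$, where $r(I)=0$ and $F(I)$ is a polynomial ring.
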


\begin{proof}

Assume that   $G(I)=\{u_1,\ldots, u_{n+t}\}$ where $u_i=x_i^{d}$  for  $1\leq i\leq n$, $u_{n+i}=x_{1}^{a_{i1}}x_{2}^{a_{i2}}\cdots x_{n}^{a_{in}}$  with $0\leq a_{ij}<d$  for   $1\leq j\leq n$ and $1\leq i\leq t$.
Thus $\mu(I)=n+t$ and $\ell(I)=n$.

Assume first that $I^2=JI$.  Then, since $I$ is  equimultiple,   Theorem \ref{equivalent}(b) implies that $I$ is  Freiman.

Conversely, suppose that  $I$ is Freiman. Then  $$\mu(I^2)=\ell(I)\mu(I)-{\ell(I)\choose 2}=n(n+t)-{n\choose 2}={n+1\choose 2}+nt.$$
Let $L=(u_{n+1},\dots, u_{n+t})$. Since $G(J^{2})\cap G(JL)=\emptyset$, we see that
$$\mu(JI)=\mu(J^2+JL)=\mu(J^2)+\mu(JL)={n+1\choose 2}+\mu(JL).$$
Note that $\mu(JL)=nt$, and hence $\mu(I^2)=\mu(JI)$. Indeed, suppose that $x_i^du_{n+j}=x_k^du_{n+l}$. If $i=k$, then $j=l$. On the other hand, if $i\neq k$, it follows that
$x_i^d$ divides $u_{n+l}$, a contradiction. Thus, $x_i^du_{n+j}=x_k^du_{n+l}$ if and only if $i=k$ and $j=l$.

 Since the monomials of $G(J^{2})$, $G(JL)$ and $G(I^{2})$
are all of degree $2d$  and $J^{2}+JL\subseteq I^{2}$, it follows that $I^{2}=JI$.

The remaining statement follows  from  Theorem \ref{equivalent}(b).
\end{proof}

Theorem~\ref{pseudo} will give  a full  characterization of  Freiman ideals of height $n$ in $S=K[x_1,\ldots,x_n]$. Before proving this theorem we need
two preliminary results.

\begin{Proposition}
\label{onedirection} Let $I\subset K[x_1,\ldots,x_m]$ be an equigenerated  monomial ideal generated in  degree $d$.  Let $K[y_1,\ldots,y_n]$ be another  polynomial ring and $J=(y_1^d,\ldots, y_n^d)$. Then  $I$ is Freiman if and only if $I+J$ is  Freiman.
\end{Proposition}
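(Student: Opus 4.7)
The plan is to show directly that the $h$-polynomials of $F(I)$ and $F(I+J)$ coincide, so by Remark~\ref{useful}(a) one has $\Delta(I) = \Delta(I+J)$, giving the equivalence.

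The first step is the combinatorial observation that, because the variables $x_1,\dots,x_m$ and $y_1,\dots,y_n$ are disjoint, the minimal generators of $(I+J)^k$ split nicely. Since
\[
(I+J)^k \;=\; \sum_{i+j=k} I^i J^j,
\]
and $G(I^i) \subset K[x_1,\dots,x_m]$ while $G(J^j) \subset K[y_1,\dots,y_n]$, any product $uv$ with $u\in G(I^i)$, $v\in G(J^j)$ is determined uniquely by its $x$-part and $y$-part; moreover, divisibility $u'v' \mid uv$ with $u'\in G(I^{i'})$, $v'\in G(J^{j'})$, $i'+j'=k$, forces $i'\le i$ and $j'\le j$, hence $i'=i$, $j'=j$, and $u'=u$, $v'=v$. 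Thus
\[
G((I+J)^k) \;=\; \bigsqcup_{i+j=k} \{uv : u\in G(I^i),\; v\in G(J^j)\},
\]
and therefore
\[
\mu\bigl((I+J)^k\bigr) \;=\; \sum_{i+j=k} \mu(I^i)\,\mu(J^j).
\]

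The second step is to translate this into Hilbert series. The above identity says that the fiber cone Hilbert series factor:
\[
\Hilb_{F(I+J)}(t) \;=\; \Hilb_{F(I)}(t)\cdot \Hilb_{F(J)}(t).
\]
Since $J=(y_1^d,\dots,y_n^d)$ has $\mu(J^k) = \binom{n+k-1}{k}$ (the number of degree-$k$ monomials in $n$ variables), we have $\Hilb_{F(J)}(t) = 1/(1-t)^n$. Writing $\Hilb_{F(I)}(t) = Q(t)/(1-t)^{\ell(I)}$ it follows that
\[
\Hilb_{F(I+J)}(t) \;=\; \frac{Q(t)}{(1-t)^{\ell(I)+n}},
\]
so $\ell(I+J) = \ell(I)+n$ and $I+J$ has the same $h$-polynomial as $I$; in particular the coefficient $h_2$ is the same for both. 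By Remark~\ref{useful}(a), $\Delta(I+J) = h_2 = \Delta(I)$, and the conclusion follows.

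I do not anticipate a serious obstacle here: the only delicate point is verifying cleanly that no cancellations occur among the generators of the various $I^i J^j$, but this is immediate once one observes that products separate into disjoint $x$- and $y$-components. After that, the multiplicativity of Hilbert series and the explicit form $1/(1-t)^n$ for the $y$-side fiber cone do all the work.
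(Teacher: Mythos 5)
Your proof is correct, and it takes a genuinely different (and in some ways cleaner) route than the paper. The paper reduces to the case $n=1$, writes out $(I+J)^2 = I^2 + JI + J^2$, observes that the three generating sets are disjoint (hence incomparable, since all generators have degree $2d$), and then directly plugs the resulting counts $\mu(I+J)=\mu(I)+1$, $\mu((I+J)^2)=\mu(I^2)+\mu(I)+1$, $\ell(I+J)=\ell(I)+1$ into the Freiman equality and checks both implications by hand. You instead establish the stronger structural fact that $\Hilb_{F(I+J)}(t)=\Hilb_{F(I)}(t)\cdot\Hilb_{F(J)}(t)$ with $\Hilb_{F(J)}(t)=1/(1-t)^n$, so that the full $h$-vector of $F(I+J)$ agrees with that of $F(I)$, and then read off $\Delta(I+J)=h_2=\Delta(I)$ via Remark~\ref{useful}(a). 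Your key lemma (the disjoint decomposition of $G((I+J)^k)$ using separation of variables and degree counting to rule out divisibility across different $(i,j)$ pairs) is sound, and it handles all powers at once rather than just $k=2$. What your approach buys: it shows that \emph{all} the $h_i$ are preserved, not just $h_2$ — essentially that $F(I+J)\cong F(I)\otimes_K F(J)$ — which is more information and avoids the slightly awkward two-direction arithmetic verification. What the paper's approach buys: it is shorter and self-contained, avoiding the need to argue about arbitrary powers. Both proofs are valid.
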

\begin{proof}It is enough to show that the assertion holds for $n=1$. Let $J=(y_1^d)$, then $\mu(J)=1$,  $\mu(I+J)=\mu(I)+1$, $\mu(JI)=\mu(I)$, $\ell(I+J)=\ell(I)+1$ and $(I+J)^2=I^2+JI+J^2$.
Since the monomials of $G(I^{2})$, $G(JI)$ and $G(J^{2})$ are pairwise distinct, it follows that
$$\mu((I+J)^2)=\mu(I^{2})+\mu(JI)+\mu(J^{2})
=\mu(I^{2})+\mu(I)+1.
$$
If $I$ is Freiman, then $$\mu(I^{2})=\ell(I)\mu(I)-{\ell(I)\choose 2}.$$
It follows that
\begin{eqnarray*}
\mu((I+J)^2)&=&\ell(I)\mu(I)-{\ell(I)\choose 2}+\mu(I)+1\\
&=&(\ell(I)+1)(\mu(I)+1)-{\ell(I)+1\choose 2}\\
&=&\ell(I+J)\mu(I+J)-{\ell(I+J)\choose 2}.
\end{eqnarray*}
This implies that  $I+J$ is  Freiman.

Conversely,  if   $I+J$ is  Freiman, then $$\mu((I+J)^2)=\ell(I+J)\mu(I+J)-{\ell(I+J)\choose 2},$$
i.e.,  $$\mu(I^{2})+\mu(I)+1=(\ell(I)+1)(\mu(I)+1)-{\ell(I)+1\choose 2}.$$
Thus $$\mu(I^{2})=\ell(I)\mu(I)-{\ell(I)\choose 2}.$$
This implies that  $I$ is  Freiman. \end{proof}

\begin{Theorem}
 \label{lexsegment}
 Let $m$ be a positive integer. Then  $I=(x_1,\ldots,x_n)^m$ is  Freiman if and only if  $m=1$, $n\leq 2$, or $n=3$ and  $m=2$.
\end{Theorem}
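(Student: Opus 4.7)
The plan is to apply Theorem~\ref{Thm3} and reduce the Freiman condition to a simple combinatorial question about exponent vectors. Since $I=(x_1,\dots,x_n)^m$ is equigenerated in degree $m$ with $\height I=n$, that theorem says $I$ is Freiman if and only if
$$(x_1,\dots,x_n)^{2m}=(x_1^m,\dots,x_n^m)\,(x_1,\dots,x_n)^m.$$

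Next, I would observe that the left-hand side is minimally generated by all monomials of degree $2m$ in $n$ variables, while the right-hand side is generated by those degree-$2m$ monomials divisible by some $x_i^m$, i.e.\ in which some exponent is at least $m$. Hence the Freiman property is equivalent to the statement that every nonnegative integer tuple $(a_1,\dots,a_n)$ with $\sum_i a_i=2m$ has some coordinate $\ge m$; equivalently, no such tuple satisfies $a_i\le m-1$ for all $i$.

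Finally, such a tuple exists if and only if the maximum value $n(m-1)$ of $\sum a_i$ under the constraints $0\le a_i\le m-1$ is at least $2m$, equivalently $m(n-2)\ge n$. Its negation $m(n-2)<n$ holds in precisely the three cases $m=1$, $n\le 2$, and $(n,m)=(3,2)$, which are therefore the Freiman cases. In every other case an explicit witness, for example $(1,1,1,1,0,\dots,0)$ when $m=2$ and $n\ge 4$, or $(m-1,m-1,2,0,\dots,0)$ when $m\ge 3$ and $n\ge 3$, produces a tuple violating the desired property, so $I$ is not Freiman. I do not anticipate any real obstacle: Theorem~\ref{Thm3} carries the main work, and what remains is routine combinatorial bookkeeping.
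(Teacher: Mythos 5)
Your proof is correct, and it shares with the paper's proof the idea of using Theorem~\ref{Thm3} to reduce the Freiman condition to the containment $(x_1,\dots,x_n)^{2m}\subseteq (x_1^m,\dots,x_n^m)(x_1,\dots,x_n)^m$. Where you diverge is in how you finish: the paper handles $n\geq 4$ by exhibiting the single witness monomial $x_1^{m-1}x_2^{m-1}x_3x_4$ outside $JI$, but then treats $n=3$ separately by computing $\mu(I)$, $\mu(I^2)$ and $\Delta(I)$ via binomial coefficients, arriving at $\Delta(I)=\tfrac{(m-1)(m-2)}{2}$. You instead observe that the degree-$2m$ monomials in $JI$ are exactly those with some exponent $\geq m$, so the equality $I^2=JI$ is equivalent to the nonexistence of an exponent vector with all coordinates $\leq m-1$ summing to $2m$, which (for $m\geq 2$) reduces to the single inequality $n(m-1)<2m$, i.e.\ $m(n-2)<n$. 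This treats all $n$ and $m$ uniformly, including the base cases $n\leq 2$ and $m=1$, and avoids the separate Hilbert-series-style computation for $n=3$. The paper's route has the modest advantage of producing the exact value of $\Delta(I)$ for $n=3$, but as a proof of the stated equivalence your argument is cleaner and entirely self-contained given Theorem~\ref{Thm3}.
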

\begin{proof} We may assume that  $n\geq 3$ and $m\geq 2$, because otherwise the statement holds from Theorem \ref{Thm3}.  If $n\geq 4$, then  $x_1^{m-1}x_2^{m-1}x_3x_4\in I^2\setminus (x_1^m,\ldots,x_n^m)I$.   Hence  $I$ is not  Freiman by Theorem~\ref{Thm3}.
So we  only need  show that if $n=3$, then $I=(x_1,\ldots,x_n)^m$ is  Freiman if and only if $m=2$. In this case,
$$\mu(I)={3+m-1\choose m}={m+2\choose m} \ \text{and}\ \ \mu(I^2)={3+2m-1\choose 2m}={2m+2\choose 2m}.$$
It follows that
\[
\Delta (I)=\mu(I^2)-3\mu(I)+{3\choose 2}={2m+2\choose 2m}-3{m+2\choose m}+3=\frac{(m-1)(m-2)}{2}.
\]
Therefore,  $I=(x_1,x_2,x_3)^m$ is  Freiman if and only if $m=2$.
\end{proof}

\medskip
One more definition  and a simple observation is required before we can formulate Theorem~\ref{pseudo}, let $I$ be a monomial ideal and let $r\ge 1$ be an integer. Let $G(I)=\{u_1,\ldots,u_m\}$. We let $I^{[r]}$ be the monomial ideal with
$G(I^{[r]})=\{u_1^r,\ldots,u_m^r\}$. The monomial   ideal $I^{[r]}$ is called the {\em $r$th pseudo-Frobenius power} of $I$. It is called a {\em proper} pseudo-Frobenius power of $I$, if $r\geq 2$.

Since $\mu(I^k)=\mu((I^{[r]})^k)$ for all $k$, it follows that $I$ is Freiman if and only if $I^{[r]}$ is Freiman. It is also obvious that $F(I)\iso F(I^{[r]})$.

\begin{Theorem}
\label{pseudo}
Let $I\subset K[x_1,\ldots,x_n]$ with $n\geq 2$  be   a Freiman ideal  of height $n$  generated in  degree $d\geq 2$, and  suppose that $I$ is not a proper pseudo-Frobenius power of another monomial ideal. Then,  up to a relabeling of the variables,  $$I=(x_1,\ldots,x_r)^d+(x_{r+1}^d,\ldots,x_n^d)$$ with $r\leq \min\{3,n\}$ if $d=2$,  and $r\leq \min\{2,n\}$ if $d\geq 3$.
\end{Theorem}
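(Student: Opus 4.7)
The plan is to invoke Theorem~\ref{Thm3}: under the hypotheses, $I$ is Freiman if and only if $I^2 = JI$ where $J = (x_1^d,\ldots,x_n^d)$. Since $\height I = n$, each $x_i^d$ lies in $G(I)$, so I may write $G(I) = \{x_1^d,\ldots,x_n^d\} \sqcup \mathcal{M}$, where $\mathcal{M}$ is the set of \emph{mixed} generators (those with $\nu_k(u) < d$ for every $k$). The equation $I^2 = JI$ translates into the combinatorial condition that for every $u,v\in G(I)$ there exists $k$ with $x_k^d \mid uv$ and $uv/x_k^d \in G(I)$. Specialising to $v=u$ already yields the basic constraint that each $u \in \mathcal{M}$ has some variable occurring with exponent $\geq \lceil d/2\rceil$.

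In the case $d=2$ each $u\in \mathcal{M}$ is an edge $x_ix_j$, and the product condition forces any two such edges to share a vertex. A graph in which every two edges meet is a star or a triangle; a two-edge star $\{x_1x_2, x_1x_3\}$ forces $(x_1x_2)(x_1x_3) = x_1^2\cdot x_2x_3 \in JI$ and hence $x_2x_3\in \mathcal{M}$, closing a triangle, while larger stars would then force edges disjoint from existing ones, violating the shared-vertex requirement. Therefore $\mathcal{M}$ is empty, a single edge, or a full triangle, producing respectively $I = (x_1^2,\ldots,x_n^2)$, $I = (x_1,x_2)^2 + (x_3^2,\ldots,x_n^2)$, or $I = (x_1,x_2,x_3)^2 + (x_4^2,\ldots,x_n^2)$, all of the required form with $r \leq \min\{3,n\}$.

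For $d\geq 3$ the target is to show $\mathcal{M}$ is either empty or, after relabelling, equal to $\{x_1^a x_2^{d-a} : 1 \leq a \leq d-1\}$. I would first prove that every $u\in \mathcal{M}$ has support of size at most $2$. For $d=3$ this is immediate from the exponent bound ($\lceil 3/2\rceil = 2$ and $\sum a_i = 3$). For $d\geq 4$ one takes a hypothetical $u\in \mathcal{M}$ of support size $m\geq 3$ and iterates the construction $u \mapsto w = u^2/x_k^d$ at the main coordinate $k$, producing further mixed generators whose multi-degrees become increasingly spread out until a suitably chosen invariant (such as the second-largest exponent, or a weighted multi-degree sum) forces a contradiction with the $\lceil d/2\rceil$-bound. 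Once every mixed generator has support of size $2$, I would show that all such supports coincide: two mixed generators with disjoint supports immediately give $\max_k\nu_k(uv) < d$; and if $u = x_1^a x_2^{d-a}$ and $v = x_1^b x_3^{d-b}$ both lie in $\mathcal{M}$, then $x_k^d$-divisibility of $uv$ forces $a+b = d$ and the appearance of $w = x_2^{d-a}x_3^{a}$ in $\mathcal{M}$; inspecting $u\cdot w$ and $v\cdot w$ pushes us to the symmetric configuration $\{x_i^{d/2} x_j^{d/2} : i<j \in [3]\} \subseteq \mathcal{M}$, available only for even $d$, in which every generator of $I$ becomes a $(d/2)$-th power and $I$ is a proper pseudo-Frobenius power, contradicting the hypothesis.

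With $\mathcal{M} \subseteq K[x_1,x_2]$ established, set $A = \{a : x_1^a x_2^{d-a} \in \mathcal{M}\}$. The product condition on $(x_1^ax_2^{d-a})(x_1^bx_2^{d-b})$ translates to the statement that $A\cup\{0\}$ is closed under addition modulo $d$, hence is a subgroup of $\ZZ/d\ZZ$ of the form $\{0,d',2d',\ldots,(d/d'-1)d'\}$ for some divisor $d'\mid d$. If $d'\geq 2$, then every generator of $I$ is a $d'$-th power---$x_i^d = (x_i^{d/d'})^{d'}$ and $x_1^{jd'}x_2^{d-jd'} = (x_1^j x_2^{d/d'-j})^{d'}$---so $I$ is a proper pseudo-Frobenius power, which is ruled out. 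Hence $d' = 1$, $A = \{1,\ldots,d-1\}$, and $I = (x_1,x_2)^d + (x_3^d,\ldots,x_n^d)$, giving $r \leq \min\{2,n\}$. The main obstacle, I expect, will be the support-size reduction in the $d\geq 3$ case: ruling out mixed generators of support $\geq 3$ requires selecting an invariant of the multi-degree that strictly decreases under the iteration $u \mapsto u^2/x_k^d$ without falling into cycles, and verifying that the three-variable "triangle" configurations appearing for even $d$ all collapse to proper pseudo-Frobenius powers.
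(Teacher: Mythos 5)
Your high-level plan coincides with the paper's: invoke Theorem~\ref{Thm3} to reduce the Freiman condition to $I^2 = JI$ with $J = (x_1^d,\ldots,x_n^d)$, handle $d=2$ by the intersecting-edges graph argument, show mixed generators for $d\geq 3$ have support of size two, recognise a subgroup of $\ZZ/d\ZZ$, and discharge the multi-support cases via the pseudo-Frobenius hypothesis. The $d=2$ case and the subgroup argument are fine and match the paper closely.

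The genuine gap is exactly where you flag it: the claim that for $d\geq 3$ every mixed generator has support of size at most two. For $d=3$ the $\lceil d/2\rceil$-bound suffices, as you say, but for $d\geq 4$ you only propose an iteration $u\mapsto u^2/x_k^d$ together with an unspecified ``invariant that strictly decreases,'' and you yourself identify this as the main obstacle. This is not a proof. The paper closes this step by a single clean computation with no iteration. Since $I^2 = JI$ gives $I^{k} = J^{k-1}I \subset J^{k-1}$ for all $k$, apply this to $u^{d-1}$ for a mixed generator $u = x_1^{a_1}\cdots x_m^{a_m}$ with $0<a_i<d$, $\sum a_i = d$, $m\geq 3$. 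Writing $a_i(d-1) = (a_i-1)d + (d-a_i)$ shows that the exact $J$-adic order of $u^{d-1}$ is $\sum_i(a_i-1) = d-m$, because each residual exponent $d-a_i$ is strictly less than $d$. On the other hand $u^{d-1}\in I^{d-1}\subset J^{d-2}$, so $d-m\geq d-2$, forcing $m\leq 2$. This bypasses any question of cycling or choosing the ``main coordinate,'' and uses the full strength of $I^2=JI$ rather than just the single consequence $u^2\in JI$ that your iteration exploits.

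The later steps of your outline — the subgroup $A\cup\{0\}\leq\ZZ/d\ZZ$, the divisor $d'$, the two-overlapping-supports computation forcing $a+b=d$ and then $d=2a$ and the symmetric triangle configuration, ruled out by the pseudo-Frobenius hypothesis — are all in line with the paper's proof, phrased slightly differently (you put the shared index at $1$, the paper at $2$; and the paper reaches the contradiction by observing that $(x_1^a,x_2^a)^2+(x_2^a,x_3^a)^2+(x_1^a,x_3^a)^2$ together with the $x_j^{2a}$ is the $a$th pseudo-Frobenius power of a degree-$2$ ideal, forcing $d=2$). So the proposal is a correct skeleton with one essential lemma asserted but not proved; supplying the $u^{d-1}$ computation would complete it.
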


\begin{proof}
It follows from Proposition~\ref{onedirection} and Theorem~\ref{lexsegment} that $I$ is Freimann, if $I=(x_1,\ldots,x_r)^d+(x_{r+1}^d,\ldots,x_n^d)$ with $r\leq \min\{3,n\}$ if $d=2$,  and $r\leq \min\{2,n\}$ if $d\geq 3$.

For the converse direction  we will use the result of Theorem~\ref{Thm3}  which says that $I$ is Freiman  if and only if $I^2=JI$, where $J=(x_1^d,\ldots,x_n^d)$. This implies in particular, that $I^k=J^{k-1}I$ for all $k\geq 2$.

Assume first that  $d=2$.  Suppose that $x_ix_j, x_kx_l\in I$ with pairwise different indices. Then $x_ix_jx_kx_l\in JI\subset J$, which is impossible. If $G(I)$ contains no mixed product, then $I=J$. If $G(I)$ contains one mixed product, say $x_1x_2$, then $I=(x_1,x_2)^2+(x_3^2,\ldots,x_n^2)$. If $G(I)$ contains two mixed products, they must have a common factor. Say, $x_1x_2,x_1x_3\in I$.  Then $x_1^2x_2x_3\in JI$. This implies that $x_2x_3\in I$,  and hence $x_1x_2, x_1x_3,x_2x_3\in I$. Thus in this case, $I=(x_1,x_2, x_3)^2+(x_4^2,\ldots,x_n^2)$.

Next we consider the case that $d\geq 3$. For a monomial $u\in S$ we set $\supp(u)=\{i\:\; x_i|u\},$ and  first show:
\begin{eqnarray}
\label{venlo}
\text{if $I$ is Freiman and there exists $u\in G(I)\setminus G(J)$, then $|\supp(u)|=2$.}
\end{eqnarray}
Indeed, let $u\in G(I)\setminus G(J)$ with  $|\supp(u)|=m\geq 3$. We may assume that $u=x_1^{a_1}\cdots x_m^{a_m}$ with $0<a_i<d$ for $i=1,\ldots,m$. Let $a_i(d-1)\equiv r_i\mod d$ with $0\leq r_i<d$.  It follows that $r_i=d-a_i$. Hence $(a_i-1)$ is the highest power of $x_i^d$ which divides $u^{d-1}$ and we get
\[
u^{d-1}=(x_1^d)^{a_1-1}\cdots (x_m^d)^{a_m-1}x_1^{d-a_1}\cdots x_m^{d-a_m}.
\]
It follows that $u^{d-1}\in J^{d-m}\setminus J^{d-m+1}$, because $(a_1-1)+\cdots+(a_m-1)=d-m.$  Since $u^{d-1}\in I^{d-1}=J^{d-2}I\subset J^{d-2}$, it follows that $m=2$.

Now  suppose there exists a monomial  $u\in G(I)$ with $|\supp u|=2$. We may assume that $\supp u=\{1,2\}$. Let $\S$ be  the set of all monomials $u$ in $G(I)$ with $\supp u=\{1,2\}$. Let $x_1^{d-i}x_2^i,x_1^{d-j}x_2^j\in \S$. If $i+j\leq d$, then, since $I^2=JI$, it follows that $x_1^{d-(i+j)}x_2^{i+j}\in I$, and if $i+j> d$, then $x_1^{d-(i+j-d)}x_2^{i+j-d}\in I$. This shows that the elements $i+dZ\in \ZZ/d\ZZ$ with $x_1^{d-i}x_2^i\in \S$ form a subgroup $U$ of $\ZZ/d\ZZ$. Any subgroup of $\ZZ/d\ZZ$ is cyclic. Let $0<a<d$ be  the smallest  integer with $a+d\ZZ\in U$.  Then $a$ divides $d$ and $x_1^{d-la}x_2^{la}\in I$ for $l=0,\ldots,r$, where $r=d/a$. Hence $(x_1^a,x_2^a)^r\subset I$ with $ar=d$ and $a$ minimal  with this property.

Suppose $G(I)$ contains another monomial with support $\{i,j\}\neq \{1,2\}$, say $v=x_i^{d-e}x_j^e$ with $0<e<d$. Suppose that $\{1,2\}\sect\{i,j\}=\emptyset$. Then $x_1^{d-a}x_2^ax_i^{d-e}x_j^e\in I^2\subset J$ with all exponents less than $d$, a contradiction. Hence $ \{i,j\}\sect\{1,2\}\neq \emptyset$, and we may assume that $\{i,j\}=\{2,3\}$. Then $(x_2^b,x_3^b)^s\subset I$ with $bs=d$ and $b$ minimal  with this property. Hence we have $(x_1^a,x_2^a)^r+(x_2^b,x_3^b)^s\subset I$. It follows that $w_1=x_1^ax_2^{2d-(a+b)}x_3^b=(x_1^ax_2^{d-a})(x_2^{d-b}x_3^b)\in I^2\subset J$. If $a+b>d$, then all exponents of $w_1$ are less than  $d$, a contradiction, and if $a+b<d$, then $x_1^ax_2^{d-(a+b)}x_3^b\in I$, contradicting the fact that $G(I)$ does not contain monomials whose support has more than $2$ elements. Thus we must have that $a+b=d$. This shows that $(x_2^b,x_3^b)^s=(x_2^a,x_3^a)^r$. It follows that $x_1^ax_2^{2d-2a}x_3^a\in I^2$. Since $d=ra$ with $r\geq 2$ it follows that $d-2a\geq 0$. Therefore, $w_2=x_1^ax_2^{d-2a}x_3^a\in I$. If $d>2a$, then $|\supp(w_2)|=3$, a contradiction. Hence, $d=2a$, and $(x_1^a,x_2^a)^2+(x_2^a,x_3^a)^2\subset I$. If there exists no other monomial in $G(I)$, whose cardinality is $2$. Then
\[
I=(x_1^a,x_2^a)^2+(x_2^a,x_3^2)^2+(x_3^{2a},\ldots, x_n^{2a}).
\]
Since we assume that $I$ is not a proper pseudo-Frobenius power, it follows that $d=2$, contradicting our assumption that $d\geq 3$.

By the above arguments, if there exists other elements $u\in G(I)$ with $\supp(u)=\{i,j\}$, then $\{i,j\}\sect\{1,2\}\neq  \emptyset$ and $\{i,j\}\sect\{2,3\}\neq  \emptyset$. Therefore, $\{i,j\}=\{1,3\}$, and hence
\[
I=(x_1^a,x_2^a)^2+(x_2^a,x_3^a)^2+(x_1^a,x_3^a)^2+(x_3^{2a},\ldots, x_n^{2a}).
\]
Since we assume that $I$ is not a proper pseudo-Frobenius power, it follows that $a=1$ and $d=2$, contradicting our assumption that $d\geq 3$.
\end{proof}

\begin{Corollary}
\label{cm}
Let $I\subset  K[x_1,\ldots,x_n]$  be a Freiman ideal of height $n$. Then $F(I)$ is Cohen--Macaulay.
\end{Corollary}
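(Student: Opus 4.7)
The plan is to observe that the desired Cohen--Macaulay property has essentially been embedded in the statement of Theorem~\ref{Thm3}. A Freiman ideal is by definition equigenerated, so $I$ is generated in some single degree $d$. Moreover, since $\height(I)=n=\dim S$ and one always has $\height(I)\leq \ell(I)\leq \dim S$, we see that $\ell(I)=\height(I)=n$, so $I$ is in fact equimultiple. Thus $I$ meets all hypotheses of Theorem~\ref{Thm3}.

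Under these hypotheses, Theorem~\ref{Thm3} asserts the equivalence of ``$I$ is Freiman'' with ``$I^{2}=JI$'' for $J=(x_1^d,\ldots,x_n^d)$, and its last sentence directly concludes that when this equivalent condition holds, $F(I)$ is Cohen--Macaulay. Applying this to our Freiman ideal yields the corollary immediately. Internally, the Cohen--Macaulayness there is obtained by noting that $I^{2}=JI$ means $J$ is a minimal reduction of $I$ (using that $\mu(J)=n=\ell(I)$) with $r_J(I)=1$, and then invoking Theorem~\ref{equivalent}(b), which rests on Shah's criterion for equimultiple ideals of reduction number one.

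Since the heavy lifting has been done in Theorem~\ref{Thm3}, there is no substantive obstacle here; the only care needed is to record the three ingredients (equigeneration, equality of height and analytic spread, and equimultiplicity) that certify Theorem~\ref{Thm3} is applicable. A longer alternative would be to pass through the explicit classification in Theorem~\ref{pseudo}: after reducing to the non-proper-pseudo-Frobenius case via the isomorphism $F(I)\cong F(I^{[q]})$, one would verify Cohen--Macaulayness for each of the ideals $I=(x_1,\ldots,x_r)^d+(x_{r+1}^d,\ldots,x_n^d)$ on the short list. However this is strictly more laborious and unnecessary given Theorem~\ref{Thm3}.
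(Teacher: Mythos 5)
Your proposal is correct and takes essentially the same route as the paper's main proof: it reads off the Cohen--Macaulayness from the final sentence of Theorem~\ref{Thm3}, which in turn rests on $r(I)=1$, equimultiplicity, and Corollary~\ref{fishsticks}(b) (i.e.\ Theorem~\ref{equivalent}(b) via Shah's criterion). The paper also records exactly the alternative you mention, namely reducing via the classification of Theorem~\ref{pseudo} to Veronese rings.
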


\begin{proof} By Theorem~\ref{Thm3} we know that $r(I)=1$. Therefore, since $I$ is equimultiple, the assertion follows from Corollary~\ref{fishsticks}. Here we give an alternative  proof using Theorem~\ref{pseudo}: We first notice that if $I$ is a  graded ideal and $J=(I, f)$ where $f$ is a homogeneous polynomial and a non-zerodivisor modulo $I$. Then $F(J)$ is a polynomial ring over $F(I)$ in one indeterminates. Therefore, $F(I)$ is Cohen--Macaulay if and only if $F(J)$ is Cohen--Macaulay. Thus,  in order to prove that $F(I)$ is Cohen--Macaulay for a Freiman ideal of height $n$ in $K[x_1,\ldots,x_n]$, it is enough, due to Theorem~\ref{pseudo},  to show that $F(L)$ is Cohen--Macaulay for $L=(x_1^a,x_2^a,x_3^a)^2$  or $L=(x_1^a,x_2^a)^r$. In both cases $F(L)$ is just a Veronese ring, which is known to be Cohen--Macaulay, see for example \cite{BH}.
\end{proof}

If in Corollary~\ref{cm} we do not suppose that $\height(I) =n$,  is it still true that $F(I)$ Cohen--Macaulay?

\begin{Corollary}
\label{product}
Let $I,J\subset K[x_1,\ldots,x_n]$ be an equigenerated monomial ideals of height $n$. Then the  following holds:
\begin{enumerate}
\item[(a)] If $n=3$, then $IJ$ is Freiman, if and only if $I=J=(x_1^d, x_2^d, x_3^d)$ for some integer $d\geq 1$.
\item[(b)] If $n\geq 4$, then $IJ$  is not Freiman.
\end{enumerate}
\end{Corollary}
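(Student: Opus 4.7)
The plan is to exploit Theorem~\ref{Thm3}: since $I$ and $J$ both have height $n$, we have $x_i^{d_I}\in G(I)$ and $x_i^{d_J}\in G(J)$ for every $i$, so $IJ$ is equigenerated of height $n$ in degree $d:=d_I+d_J$, and the Freiman condition for $IJ$ becomes $(IJ)^2=(x_1^d,\ldots,x_n^d)(IJ)$. My strategy is to test this equality against cleverly chosen monomial witnesses built from generators of $I$ and $J$.

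For part (b), a single witness suffices: the monomial
\[
w=x_1^{d_I}x_2^{d_I}x_3^{d_J}x_4^{d_J}=(x_1^{d_I}x_3^{d_J})(x_2^{d_I}x_4^{d_J})
\]
lies in $(IJ)^2$, while every variable in $w$ has exponent at most $\max(d_I,d_J)<d$, so no $x_i^d$ divides $w$, and thus $w\notin(x_1^d,\ldots,x_n^d)(IJ)$. This uses $n\ge 4$ in an essential way, to have four distinct variables available.

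For the ``if'' direction of part (a), $IJ=(x_1^{d'},x_2^{d'},x_3^{d'})^2$ coincides with the $d'$th pseudo-Frobenius power of $(x_1,x_2,x_3)^2$, which is Freiman by Theorem~\ref{lexsegment}; the Freiman property then transfers because the property is invariant under pseudo-Frobenius powers. For the ``only if'' direction, I would choose $q$ maximal with $IJ=M^{[q]}$, so that $M$ is Freiman, has height~$3$, and is not a proper pseudo-Frobenius power. Theorem~\ref{pseudo} (augmented by the trivial case $d_M=1$) forces $M$ to be one of three ideals: $(x_1,x_2,x_3)$, $(x_1,x_2)^{d_M}+(x_3^{d_M})$ with $d_M\ge 2$, or $(x_1,x_2,x_3)^2$. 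The first is ruled out because it would make every $uv$ a pure power, but $u=x_1^{d_I}$ and $u=x_2^{d_I}$ would then force each $v\in G(J)$ to be simultaneously a power of $x_1$ and of $x_2$. The second is ruled out because the only generator of $M^{[q]}$ involving $x_3$ is $x_3^d$, so choosing $u=x_3^{d_I}$ forces every $v\in G(J)$ to equal $x_3^{d_J}$, contradicting $\mu(J)\ge 3$.

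The remaining case $M=(x_1,x_2,x_3)^2$ is where the real work lies. Here every $uv$ must equal some $x_i^qx_j^q$; applied to the three pure powers $u=x_k^{d_I}$ for $k=1,2,3$, this restricts $v$ to lie in a set $S_k=\{x_k^{d_J}\}\cup\{x_k^{q-d_I}x_\ell^q:\ell\ne k\}$ (with the latter part present only when $q\ge d_I$). A short case split on whether $q>d_I$, $q<d_I$, or $q=d_I$ shows $S_1\cap S_2\cap S_3$ is empty unless $q=d_I$, in which case the intersection is exactly $\{x_1^{d_J},x_2^{d_J},x_3^{d_J}\}$. Combining this with the symmetric argument applied to $G(I)$ and the height hypothesis forces $I=J=(x_1^{d'},x_2^{d'},x_3^{d'})$ with $d'=q$. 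I expect the main obstacle to be verifying this intersection computation cleanly across the sign cases for $q-d_I$; the rest of the argument is essentially bookkeeping.
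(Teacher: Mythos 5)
Your proposal is correct but takes a genuinely different route from the paper's. For part (b), you produce a single explicit witness, $w=(x_1^{d_I}x_3^{d_J})(x_2^{d_I}x_4^{d_J})\in (IJ)^2$, whose exponents are all strictly below $d=d_I+d_J$, and invoke Theorem~\ref{Thm3} directly; this is shorter and more elementary than the paper's argument, which first classifies $IJ$ via Theorem~\ref{pseudo} and then derives a contradiction with the support bound (\ref{venlo}) in each of the resulting normal forms. For part (a), the paper applies (\ref{venlo}) head-on: any generator of $I$ or $J$ of support size at least two, multiplied by a suitable pure power from the other factor, gives a generator of $IJ$ of support size at least three, forcing $I$ and $J$ to be generated by pure powers, after which a single witness monomial is used to force $d_I=d_J$. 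You instead strip off the pseudo-Frobenius factor of $IJ$, apply Theorem~\ref{pseudo} to classify the reduced ideal $M$ into three shapes, and constrain $G(I)$ and $G(J)$ from the structure of $IJ=M^{[q]}$ via the intersection of the sets $S_k$. The paper's route to (a) is shorter because (\ref{venlo}) settles the support bookkeeping at one stroke, whereas you need the sign case split on $q-d_I$. Your route has the compensating virtue that it never requires checking non-membership of a hand-picked monomial in $(x_1^d,x_2^d,x_3^d)IJ$, a step where the paper's final witness $x_1^{2d_I}x_2^{2d_J}$ is more delicate than the text indicates: for instance when $d_I=2d_J$ (with both ideals generated by pure powers) that monomial does lie in $(x_1^d,x_2^d,x_3^d)IJ$, so a different witness would be needed there. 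Both routes reach the same conclusion; yours is arguably the more robust on this last point.
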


\begin{proof}
(a) If $I=J=(x_1^d, x_2^d, x_3^d)$ for some integer $d\geq 1$, then $IJ$ is Freiman by Theorem~\ref{pseudo}.

Conversely, if $IJ$ is Freiman.  Let $I=(x_1^{d_1},x_2^{d_1}, x_3^{d_1},\ldots)$, $J=(x_1^{d_2},x_2^{d_2},x_3^{d_2},\ldots)$ with  $d_1\geq d_2\geq 1$ and $d_1\neq 1$. Suppose $I$ or $J$ contains a monomial generator $u$  with $\supp(u)\geq 2$. Suppose $u\in G(I)$ with $\supp(u)>2$. Let $v\in G(J)$. Then $uv\in G(IJ)$ and $\supp(uv)>2$. This contradicts (\ref{venlo}), since we assume that  $IJ$ is Freiman.

Now suppose that $\supp(u)= 2$  for  $u\in G(I)$ or $u\in G(J)$.  We assume that $u\in G(I)$   and $u=x_i^ax_j^{d_1-a}$ with $i\neq j$. Since $n=3$, we may choose the integer  $k$ with $1\leq k\leq 3$ and $k\neq i,j$. Then $x_i^ax_j^{d_1-a}x_k^{d_2}\in G(IJ)$, contradicting (\ref{venlo}).

It follows that  $I=(x_1^{d_1},x_2^{d_1}, x_3^{d_1})$ and $J=(x_1^{d_2},x_2^{d_2},x_3^{d_2})$.   We show that $d_1=d_2$. This then  yields the desired conclusion. Indeed, suppose that $d_2\neq d_1$. Then $x_1^{2d_1}x_2^{2d_2}\in (IJ)^2\setminus M(IJ)$ where $M=(x_1^{d_1+d_2},x_2^{d_1+d_2}, x_3^{d_1+d_2})$,  contradicting our assumption that $IJ$ is   Freiman.

(b) If  $IJ$  is   Freiman,  then, by Theorem~\ref{pseudo}, we have $IJ=(x_1^a,x_2^a,x_3^a)^2+(x_4^{2a},\ldots, x_n^{2a})$ or $IJ=(x_1^a,x_2^a)^r+(x_3^{ar},\ldots, x_n^{ar })$ with $a\geq 1,r>1$. If $IJ=(x_1^a,x_2^a,x_3^a)^2+(x_4^{2a},\ldots, x_n^{2a})$, then $x_1^ax_2^{a}x_3^{a}\in I^2J\subset IJ$, contradiction (\ref{venlo}). If
 $IJ=(x_1^a,x_2^a)^r+(x_3^{ar},\ldots, x_n^{ar })$, then  $x_1^ax_2^{ar-a}\in IJ\subset I$.
 Since $x_3^{ar}\in J$, it  follows that $x_1^ax_2^{ar-a}x_3^{ar}\in IJ$, contradiction (\ref{venlo}).
\end{proof}

The reader may wonder why in Corollary~\ref{product} we did not consider the case $n=2$. The reason is that there is no simple answer in this case.
For example, for any integer $d\geq 4$ there are many different ideals $I\subset K[x_1,x_2]$ of degree  $d$ such that $I^2=(x_1,x_2)^{2d}$ (which is Freiman).

But we can give an answer, if we assume that $I$ and $J$ are Freiman. Since any Freiman ideal in two variables is of the form $(x_1^a,x_2^a)^r$ (see Theorem~\ref{pseudo}), the following results provides the required answer.

\begin{Proposition}
\label{zhumustprove}
Let $I=(x_1^a,x_2^a)^r$ and $J=(x_1^b,x_2^b)^s$. Then $IJ=(x_1^c,x_2^c)^t$ for some $c$ and $t$ if and only if  $b=ka$ for some positive integer  $k$ and $r\geq k-1$.
\end{Proposition}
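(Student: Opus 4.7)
The plan is to reduce the proposition to an elementary statement about the sum set
$$T\;:=\;\{\,ai+bj\,:\,0\leq i\leq r,\ 0\leq j\leq s\,\}$$
and then to verify that this sum set is an arithmetic progression precisely under the stated hypotheses. Since every element of $G(I)$ has total degree $ar$ and every element of $G(J)$ has total degree $bs$, all products of generators of $IJ$ have the same total degree $ar+bs$. Consequently the minimal monomial generators of $IJ$ are completely determined by their $x_2$-exponents, so $G(IJ)=\{x_1^{ar+bs-e}x_2^{e}:e\in T\}$. Hence $IJ=(x_1^c,x_2^c)^t$ is equivalent to $T=\{0,c,2c,\ldots,tc\}$ with $tc=ar+bs$.

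For the forward direction, after possibly interchanging the roles of $I$ and $J$ we may assume $a\leq b$ (the statement is to be read up to this relabeling). Assuming $T=c\{0,1,\ldots,t\}$, the smallest positive element of $T$ equals $\min(a,b)=a$, which forces $c=a$; then $b\in T=a\{0,1,\ldots,t\}$ forces $a\mid b$, so $b=ka$ for some positive integer $k$. To derive $r\geq k-1$, observe that $t=r+ks\geq r+1$, so $a(r+1)\in T$, meaning $r+1=i+kj$ for some $0\leq i\leq r$ and $0\leq j\leq s$. The case $j=0$ would give $i=r+1>r$, so $j\geq 1$; then $i=r+1-kj\geq 0$ forces $kj\leq r+1$, and taking the minimal such $j$ (namely $j=1$) yields $k\leq r+1$, i.e., $r\geq k-1$.

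For the converse, assume $b=ka$ with $k\geq 1$ and $r\geq k-1$. Then $T=a\cdot U$ where $U=\{\,i+kj:0\leq i\leq r,\ 0\leq j\leq s\,\}$, and it suffices to show $U=\{0,1,\ldots,r+ks\}$. For each fixed $j$ the set $\{i+kj:0\leq i\leq r\}$ is the integer interval $[kj,\,kj+r]$. Since $k\leq r+1$ by hypothesis, the consecutive intervals $[kj,kj+r]$ and $[k(j+1),k(j+1)+r]$ either overlap or are adjacent, so a straightforward induction on $j$ gives $U=[0,r+ks]\cap\ZZ$. This yields $IJ=(x_1^a,x_2^a)^{r+ks}$, completing the proof.

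The entire argument is elementary once the problem is rephrased in terms of the sum set $T$; there is no genuine obstacle, and the only technical step is the interval-covering observation that under $k\leq r+1$ the intervals $[kj,kj+r]$ cover $[0,r+ks]\cap\ZZ$ as $j$ ranges over $0,\ldots,s$.
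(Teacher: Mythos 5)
Your proof is correct and follows essentially the same route as the paper: reduce to the sum-set $T=\{ai+bj\}$ (the paper's set $A$), deduce $c=a$ from the smallest positive element, and obtain $a\mid b$ and $r\geq k-1$ by a small witness argument. The only real difference is that you spell out the converse via the interval-covering observation, whereas the paper dismisses that step as obvious; your version is in fact the more complete write-up.
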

\begin{proof}  We may assume that  $a\leq b$. The  generating set of $IJ$ is $$G(IJ)=\{x_1^{ar+bs-(ai+bj)}x_2^{ai+bj}\mid i=0,\ldots,r, j=0,\ldots,s\}.$$  Thus $IJ=(x_1^c,x_2^c)^t$, if and only if $A=B$, where
\[
A=\{ai+bj \mid i=0,\ldots,r, j=0,\ldots,s\} \text{ and } B=\{cl\mid l=0,\ldots,t\}.
\]
Since the smallest nonzero element of $A$ is $a$ and the smallest nonzero element of $B$ is $c$, it follows that $c=a$.

Since  $b\in A$  and  $A=B$, and all elements of $B$ are multiples of $a$, we obtain that $b=ka$ for some integer $k\geq 1$.

Since $ar+kas=at$, we have $r+ks=t$. Therefore, $t>k$, and hence $(k-1)a\in B$. Suppose that $r<k-1$, then this implies that $(k-1)a\not\in A$, because it cannot be written in the form $ia+kaj$ with  $i=0,\ldots,r$ and $j=0,\ldots,s$. Hence we must have that $r\geq k-1$.

Conversely, if   $b=ka$ for some integer $k\geq 1$ and $r\geq k-1$, then the  generating set of $IJ$ is $$G(IJ)=\{x_1^{a(r+ks)-a(i+kj)}x_2^{a(i+kj)}\mid i=0,\ldots,r, j=0,\ldots,s\}.$$
Let $C=\{i+kj \mid i=0,\ldots,r, j=0,\ldots,s\}\text{ and } D=\{l\mid l=0,\ldots,r+ks\}$. It is clear that $C\subset D$. We have to show that $C=D$, and this is obviously the case if $r\geq k-1$.
\end{proof}

\section{Special classes of Freiman  ideals}
\label{sec3}

\subsection{Principal Borel ideals}

Let $S= K[x_1,\ldots,x_n]$ be the polynomial ring in $n$ indeterminates over a field $K$. A monomial ideal $I\subset S$ is called {\em strongly stable}, if for all $u\in G(I)$ and all $j\in \supp(u)$ it follows that $x_i(u/x_j)\in  I$ for all $i<j$.  Given monomials $u_1,\ldots,u_m\in S$,  there exists a unique smallest strongly stable ideal, denoted $B(u_1,\ldots,u_m)$,  which contains the monomials $u_1,\ldots,u_m$. The monomial ideal $I$ is called a {\em principal Borel ideal},  if
$I=B(u)$ for some monomial $u\in S$.

\medskip
Let $u \in S$ be a monomial. We set $m(u)=\max\{j \;\mid \; j\in \supp(u)\}$.

\begin{Lemma}
\label{analytic spread}
Let $I=B(u_{1},\ldots, u_{m})$,  where  $u_{1},\ldots, u_{m} \in  S$ are   monomials of same degree. Then $\ell(I)=\max\{m(v)\mid v\in G(I)\}$.
\end{Lemma}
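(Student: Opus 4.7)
\medskip
\noindent\textbf{Proof proposal.} The plan is to identify $F(I)$ with a toric subalgebra of $S$, which reduces the computation of $\ell(I)$ to a lattice rank computation. For an equigenerated monomial ideal $I$ one has the standard isomorphism $F(I)\iso K[u:u\in G(I)]\subset S$, so
\[
\ell(I) \;=\; \dim F(I) \;=\; \rank_{\ZZ}\Lc,\qquad \Lc := \sum_{u\in G(I)} \ZZ\cdot\log u \;\subset\; \ZZ^n,
\]
where $\log(x_1^{b_1}\cdots x_n^{b_n})=(b_1,\ldots,b_n)$. Writing $N=\max\{m(v):v\in G(I)\}$, the lemma reduces to showing $\rank \Lc = N$.

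The upper bound $\rank\Lc\le N$ is immediate: every $u\in G(I)$ is supported in $\{1,\ldots,N\}$, whence $\Lc\subset \ZZ^N\oplus\{0\}^{n-N}$.

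For the lower bound, we exhibit $N$ elements $w_1,\ldots,w_N\in G(I)$ whose exponent vectors form a lower-triangular system with positive diagonal. Fix $v=x_1^{a_1}\cdots x_N^{a_N}\in G(I)$ with $m(v)=N$, so $a_N\ge 1$. For each $j\in\{1,\ldots,N\}$ we apply the strong stability move $x_k\mapsto x_j$ (valid for any $k>j$ with $x_k$ dividing the current monomial) iteratively in order to collapse every factor of $v$ of index $>j$ onto $x_j$. The result is
\[
w_j \;=\; x_1^{a_1}x_2^{a_2}\cdots x_{j-1}^{a_{j-1}}\,x_j^{a_j+a_{j+1}+\cdots+a_N} \;\in\; I,
\]
a degree-$d$ element of $I$; consequently any element of $G(I)$ dividing $w_j$ has degree $d$ and must coincide with $w_j$, so $w_j\in G(I)$. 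The $j$-th coordinate of $\log w_j$ is $\ge a_N\ge 1$ and the coordinates of index $>j$ vanish, producing $N$ vectors in $\Lc$ that are linearly independent over $\QQ$.

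No step should present a serious obstacle. The one subtlety worth flagging is the remark that any degree-$d$ element of an equigenerated ideal $I$ automatically lies in $G(I)$, which is precisely what ensures the monomials $w_j$ produced by iterated Borel moves are genuine minimal generators rather than merely elements of $I$.
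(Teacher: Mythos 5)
Your proposal is correct and follows essentially the same route as the paper: both identify $\ell(I)$ with the rank of the exponent matrix of $G(I)$ (the paper cites \cite[Lemma 10.3.19]{HH1}, you phrase it as the rank of the lattice $\Lc$, which is the same thing), both get the upper bound by observing the columns past position $N$ vanish, and both use strong stability to exhibit $N$ independent exponent vectors. The only difference is cosmetic: the paper takes a single Borel move $v_i=x_i(v/x_s)$ for $i=1,\dots,s$ from a generator $v$ with $m(v)=s$, whereas you iterate moves to build a triangular family $w_j$; your choice makes the linear independence immediate by triangularity, at the cost of a slightly longer construction and the (correctly flagged) remark that every degree-$d$ monomial in $I$ is automatically a minimal generator, which is needed to keep the iteration inside $G(I)$.
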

\begin{proof} Let $C$ be the integer matrix whose rows correspond to the exponent vectors of the monomials in $G(I)$. Then $\ell(I)=\dim F(I)=\rank(C)$, see
\cite[Lemma 10.3.19]{HH1}.
Let $s=\max\{m(v)\mid v\in G(I)\}$. Then  each $j$th column of $C$ with $j>s$ is zero. Therefore, $\rank(C)\leq s$. Now let $v\in G(I)$ with $m(v)=s$. Since $I$ is strongly stable, the monomials $v_i= x_i(v/x_s)$ belong to $G(I)$ for $=1,\ldots,s$, The exponent vectors of these monomials are linearly independent. Therefore, $\rank(C)\geq s$.
  \end{proof}

\begin{Theorem}\label{Borel}
Let $K$ be a field and $S=K[x_1,\ldots,x_n]$ be the polynomial ring in  $n$ indeterminates over $K$.
\begin{enumerate}
\item[(a)] Let $u=x_ix_j$ with $i\leq j$  be a monomial of degree $2$ in $S$. Then $B(u)$ is Freiman if and only if
$j\leq 3$, or $j>3$ and $i\leq 2$.

\item[(b)] Let  $i, j$ and $d$ be integers such that $u=x_i^{d-1}x_j$ with $d\geq 3$  and $1\leq i\leq j\leq n$. Then  $B(u)$  is Freiman if  $i\leq 2$.
\end{enumerate}
\end{Theorem}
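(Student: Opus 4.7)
The plan is to base both parts on the explicit description of $G(B(u))$ via the Borel order, together with the fact that for principal Borel ideals one has $B(u)\cdot B(v) = B(uv)$, so in particular $B(u)^k = B(u^k)$. Combined with Lemma~\ref{analytic spread}, this converts every quantity appearing in the Freiman equality into a count of sorted tuples, reducing the whole proof to careful bookkeeping.

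For part~(a), given $I = B(x_ix_j)$ with $i \leq j$, the minimal generators are $G(I) = \{x_ax_b : 1 \leq a \leq b,\ a \leq i,\ b \leq j\}$, so $\mu(I) = \binom{i+1}{2} + i(j-i)$ and $\ell(I) = j$ by Lemma~\ref{analytic spread}. For the Freiman direction I would split into three subcases. When $i = 1$, the ideal factors as $I = x_1(x_1,\dots,x_j)$, so $F(I)$ is isomorphic to the polynomial ring $F((x_1,\dots,x_j))$ in $j$ variables, forcing $h_k = 0$ for all $k \geq 2$. When $i = 2$, write $I = x_1\mm_j + x_2\nn_j$ with $\mm_j = (x_1,\dots,x_j)$ and $\nn_j = (x_2,\dots,x_j)$; then $I^2 = x_1^2\mm_j^2 + x_1x_2\mm_j\nn_j + x_2^2\nn_j^2$, and partitioning its minimal generators by $x_1$-valuation (values $\geq 2$, $=1$, $=0$) yields $\mu(I^2) = \binom{j+1}{2} + 2\binom{j}{2}$, which is readily checked to equal $j\mu(I) - \binom{j}{2}$. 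The remaining case $j \leq 3$ is either covered by the two subcases just treated or, for $i = j = 3$, by $B(x_3^2) = (x_1,x_2,x_3)^2$ and Theorem~\ref{lexsegment}.

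For the non-Freiman direction of~(a), assume $i \geq 3$ and $j \geq 4$. Since $I^2 = B(x_i^2x_j^2)$, its minimal generators are exactly the monomials $x_px_qx_rx_s$ with $p \leq q \leq r \leq s \leq j$ having at most two entries exceeding $i$ (the constraint $q \leq i$ comes from Borel comparison with the sorted tuple $(i,i,j,j)$). A direct count of the complement of the ``bad'' tuples gives
$$\mu(I^2) = \binom{j+3}{4} - i\binom{j-i+2}{3} - \binom{j-i+3}{4},$$
and a routine (if slightly tedious) binomial computation shows $\Delta(I) = \mu(I^2) - j\mu(I) + \binom{j}{2} > 0$ throughout this range. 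For part~(b), the case $i = 1$ is immediate: $B(x_1^{d-1}x_j) = x_1^{d-1}(x_1,\dots,x_j)$, whose fiber cone is again a polynomial ring. For $i = 2$, I enumerate $G(B(x_2^{d-1}x_j))$ via sorted tuples $(a_1,\dots,a_d)$ with $a_k \leq 2$ for $k \leq d-1$ and $a_d \leq j$, obtaining $\mu(I) = dj - d + 1$ and $\ell(I) = j$. Using $I^2 = B(x_2^{2d-2}x_j^2)$, I split minimal generators by $t = |\{k \leq 2d-2 : b_k = 2\}|$: the case $t = 0$ contributes $\binom{j+1}{2}$ admissible trailing pairs $(b_{2d-1},b_{2d})$, while each of $t = 1,\dots,2d-2$ contributes $\binom{j}{2}$. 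Summing, $\mu(I^2) = \binom{j+1}{2} + (2d-2)\binom{j}{2}$, and a short algebraic check identifies this with $j(dj-d+1) - \binom{j}{2}$, confirming the Freiman equality.

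The step I expect to be the main obstacle is the binomial identity in part~(a) verifying strict positivity of $\Delta(B(x_ix_j))$ throughout $i \geq 3$, $j \geq 4$: it is not conceptually deep, but it requires care to treat the boundary cases $i = j$ and $i < j$ uniformly. A more conceptual alternative would invoke Cohen--Macaulayness of the fiber cone of any polymatroidal ideal together with Corollary~\ref{fishsticks}, reducing non-Freimanness to displaying a concrete obstruction to $I^2 = JI$ for every minimal reduction $J$ of $I$.
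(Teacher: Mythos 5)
Your plan follows the paper's strategy closely: count $\mu(I)$ and $\mu(I^2)$ explicitly, read off $\ell(I)$ from Lemma~\ref{analytic spread}, and decide Freimanness by computing $\Delta(I)$. The bookkeeping differs in flavor but is essentially equivalent: where you invoke $B(u)B(v)=B(uv)$ and count sorted index tuples subject to coordinatewise constraints, the paper factors $I=(x_1,\ldots,x_i)(x_1,\ldots,x_n)$ (respectively $(x_1,x_2)^{d-1}(x_1,\ldots,x_n)$ in (b)), expands $I$ and $I^2$ into sums of pairwise generator-disjoint pieces, and adds up the counts; the two devices produce the same (different-looking but algebraically identical) formulas, as one can check on small cases. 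Your treatment of $i\le 2$ by exhibiting $F(I)$ as a polynomial ring, resp.\ a direct verification of the Freiman equality, is a perfectly good alternative to plugging into the general formula, and your $i=2$ computation in part (b) is correct.

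The one genuine gap is the step you yourself flag: for part (a) in the range $i\ge 3$, $j\ge 4$ you assert that ``a routine binomial computation shows $\Delta(I)>0$'' without carrying it out, and this is precisely where all the content of the ``only if'' direction lives. The paper does the algebra, obtaining
\[
\Delta(I)=\frac{(i-1)(i-2)}{4!}\Bigl[\,6n^2-2n(4i+3)+3i(i+1)\,\Bigr],
\]
and this factorization is what makes the dichotomy transparent: the prefactor kills $i\in\{1,2\}$, the bracketed quadratic equals $6(n-2)(n-3)$ when $i=3$ (vanishing exactly at $n=3$), and for $i\ge4$ one checks it is increasing in $n\ge i$ with positive value at $n=i$. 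Without producing this (or an equivalent) closed form, your argument does not actually establish non-Freimanness, and the boundary behavior at $i=3$, $n=3$ versus $i=3$, $n\ge4$ is exactly the kind of thing that a vague ``routine computation'' can silently get wrong. Your explicit formula $\mu(I^2)=\binom{j+3}{4}-i\binom{j-i+2}{3}-\binom{j-i+3}{4}$ is correct, so the remaining work is purely mechanical, but it needs to be done. The alternative you sketch at the end (use Cohen--Macaulayness of the fiber cone of a polymatroidal ideal and Corollary~\ref{fishsticks}, then exhibit an obstruction to $I^2=JI$) would be a genuinely different and arguably cleaner route, but as stated it is only a suggestion, not a proof.
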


\begin{proof}
(a) Let $I=B(u)$. By Remark~\ref{useful}(b) we may assume that  $u=x_ix_n$,  then
$$I=(x_{1},\ldots,x_{i})(x_{1},\ldots,x_{n})=(x_{1},\ldots,x_{i})^2+(x_{1},\ldots,x_{i})(x_{i+1},\ldots,x_{n})$$
and
\begin{eqnarray*}
 I^2&=&(x_{1},\ldots,x_{i})^{2}(x_{1},\ldots,x_{n})^{2}\\
 &=&(x_{1},\ldots,x_{i})^{4}+(x_{1},\ldots,x_{i})^{3}(x_{i+1},\ldots,x_{n})+(x_{1},\ldots,x_{i})^{2}(x_{i+1},\ldots,x_{n})^{2}.
 \end{eqnarray*}
Since  $G((x_{1},\ldots,x_{i})^2)\sect G((x_{1},\ldots,x_{i})(x_{i+1},\ldots,x_{n}))=\emptyset$, we obtain that
 $$\mu(I)={i+1\choose 2}+i(n-i)=ni-\frac{i^2}{2}+\frac{i}{2}.$$
A similar argument shows that
\begin{eqnarray*}
\mu(I^2)&=&{i+3\choose 4}+{i+2\choose 3}(n-i)+{i+1\choose 2}{n+1-i\choose 2}\\
&=&\frac{i(i+1)}{4!}(3i^3-6i^2-3i+6-8ni^2+6ni+6n^2i+14n+6n^2).
\end{eqnarray*}

By  using  Lemma~\ref{analytic spread}, we then get
 \begin{eqnarray*}
\Delta (I)&=&\mu(I^2)-[\ell(I)\mu(I)-{\ell(I)\choose 2}]\\
&=&\frac{i(i+1)}{4!}(3i^3-6i^2-3i+6-8ni^2+6ni+6n^2i+14n+6n^2)\\
&-&n(ni-\frac{i^2}{2}+\frac{i}{2})+{n\choose 2}\\
&=&\frac{i}{4!}[6n^2(i-3)+n(-8i^2+18i+2)+(3i^3-6i^2-3i+6)]+{n\choose 2}\\
&=&\frac{1}{4!}[6n^2(i^2-3i+2)+n(-8i^3+18i^2+2i-12)+(3i^4-6i^3-3i^2+6i)]\\
&=&\frac{(i-1)(i-2)}{4!}[6n^2-2n(4i+3)+3i(i+1)].
\end{eqnarray*}
 If $i=1$ or $i=2$, then $\Delta (I)=0$, i.e., $I$ is  Freiman.

Now, we assume that $i\geq 3$. Put $f(n,i)=6n^2-2n(4i+3)+3i(i+1)$. Then $$f(n,3)=6(n-2)(n-3).$$
It follows that $I$ is  Freiman, if and only if $n=i=3$.

If $i\geq 4$, then $f(n,i)$ is a strictly monotonic increasing function for $n\geq i$.
Notice that $$f(i,i)=6i^2-2i(4i+3)+3i(i+1)=i(i-3)>0.$$
Therefore, $f(n,i)>0$ for all $n\geq i$, this shows that $\Delta (I)>0$ for all $4\leq i\leq n$.
This completes the proof of (a).

\medskip
(b)  As in (a) we may assume that $j=n$. If $u=x_1^{d-1}x_n$, then $I=x_1^{d-1}(x_1,\ldots,x_n)$. Thus $\mu(I)=\mu(J)$,
$\mu(I^2)=\mu(J^2)$ where $J=(x_1,\ldots,x_n)$. This implies that $I$ is Freiman,  see Theorem \ref{lexsegment}.

If $u=x_2^{d-1}x_n$, then $$I=(x_1,x_2)^{d-1}(x_1,\ldots,x_n)=(x_1,x_2)^{d}+(x_1,x_2)^{d-1}(x_3,\ldots,x_n)$$
and   \begin{eqnarray*}
I^2&=&(x_1,x_2)^{2d-2}(x_1,\ldots,x_n)^2\\
&=&(x_1,x_2)^{2d}+(x_1,x_2)^{2d-1}(x_3,\ldots,x_n)+(x_1,x_2)^{2d-2}(x_3,\ldots,x_n)^2.
\end{eqnarray*}
As in part (a) we see that
\begin{eqnarray*}
\mu(I^2)&=&\mu((x_1,x_2)^{2d})+\mu((x_1,x_2)^{2d-1}(x_3,\ldots,x_n))\\
&+&\mu((x_1,x_2)^{2d-2}(x_3,\ldots,x_n)^2).
\end{eqnarray*}
Hence
\begin{eqnarray*}
\mu(I^2)&=&(2d+1)+2d(n-2)+(2d-1){n-1\choose 2}\\
&=&(2d-1){n\choose 2}+n.
\end{eqnarray*}
It follows that
\begin{eqnarray*}
\Delta(I)&=& \mu(I^2)-[n \mu(I)-{n\choose 2}]\\
&=&(2d-1){n\choose 2}+n-n[(d+1)+d(n-2)]+{n\choose 2}\\
&=&0.
\end{eqnarray*}
\end{proof}

Note that for any monomial $u\in S$ it follows that $B(u)$ is Freiman if and only if $B(x_1^ku)$ is Freiman. Hence, if we want to classify the  principal Borel ideals $B(u)$  which are Freiman, we may assume that $x_1$ does not divide $u$.
We expect that for any monomial $u\in S$ of degree $d>2$ such that $x_1$ does not divide $u$,  the principal Borel ideal  $B(u)$ is Freiman if $u=x_2^{d-1}x_j$ with $j\geq 2$. For the case that $n=3$, this is shown in the next result.

\begin{Theorem} \label{Thm4} Let   $u=x_{1}^{a_{1}}x_{2}^{a_{2}}x_{3}^{a_{3}}\in K[x_{1},x_{2},x_{3}]$ with $a_{i}\geq 0$ for $i=1,2$ and $a_{3}\geq 1$.
Then $B(u)$ is Freiman if and only if $a_{3}=1$,   or $a_{3}=2$ and $a_{2}=0$.
\end{Theorem}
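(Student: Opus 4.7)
The plan is to reduce to the case $x_1\nmid u$ and then derive the Freiman condition by explicitly enumerating the minimal generators of $B(u)$ and $B(u)^2$. By the observation noted just before the theorem, $B(u)$ is Freiman if and only if $B(x_1u)$ is Freiman, so I may assume $a_1=0$ and write $u=x_2^{a_2}x_3^{a_3}$ with $d:=a_2+a_3$; this reduction preserves both alternatives in the statement.

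From the strong stability condition, $G(B(u))$ consists of the degree-$d$ monomials $x_1^{b_1}x_2^{b_2}x_3^{b_3}$ satisfying the single cut-off $b_3\le a_3$. Grouping by the value of $b_3$ gives
$$\mu(B(u))=\sum_{k=0}^{a_3}(d-k+1)=\frac{(a_3+1)(2d+2-a_3)}{2}.$$
Next I show $B(u)^2=B(u^2)$: the inclusion $\subseteq$ is clear, while for $\supseteq$, given a generator $w=x_1^{b_1}x_2^{b_2}x_3^{b_3}$ of $B(u^2)$ (so $b_3\le 2a_3$ and $\deg w=2d$), I split $b_3=c_3+e_3$ with $0\le c_3,e_3\le a_3$ and then distribute $b_1,b_2$ compatibly so that $w$ factors as a product of two generators of $B(u)$. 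Applying the counting formula above to $u^2=x_2^{2a_2}x_3^{2a_3}$ then yields
$$\mu(B(u)^2)=\mu(B(u^2))=(2a_3+1)(2d+1-a_3),$$
while Lemma~\ref{analytic spread} gives $\ell(B(u))=3$ since the assumption $a_3\ge 1$ forces some generator to involve $x_3$.

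Substituting these into $\Delta(B(u))=\mu(B(u)^2)-3\mu(B(u))+3$ and simplifying should produce the clean factorization
$$\Delta(B(u))=\frac{(a_3-1)(2d-a_3-2)}{2}.$$
Hence $\Delta(B(u))=0$ exactly when $a_3=1$ or $a_3=2(d-1)$, and in the second case $a_3\le d$ forces $d\le 2$, so $a_3=d=2$ and $a_2=0$. This matches the claimed characterization. The main technical point is the identity $B(u)^2=B(u^2)$, since for more general pairs of monomials the analogue $B(u)B(v)=B(uv)$ can fail; here, however, the only strong-stability constraint is the single inequality $b_3\le a_3$, which makes the splitting argument essentially immediate, and the rest reduces to the routine polynomial identity above.
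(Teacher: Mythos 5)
Your proposal is correct and follows essentially the same strategy as the paper: reduce to $a_1=0$, compute $\mu(B(u))$, $\mu(B(u)^2)$, and $\ell(B(u))=3$, then factor $\Delta$. The one technical difference is how you handle the square: the paper writes $J=B(x_2^{a_2}x_3^{a_3})=(x_1,x_2)^{a_2}(x_1,x_2,x_3)^{a_3}$ and counts $\mu(J)$ and $\mu(J^2)$ directly from that product decomposition (which makes $J^2=B(u^2)$ automatic), whereas you establish $B(u)^2=B(u^2)$ by an explicit splitting of exponent vectors. Both routes yield the same formulas and the same factorization $\Delta=\tfrac{(a_3-1)(2a_2+a_3-2)}{2}$; as a side remark, the identity $B(u)B(v)=B(uv)$ actually holds for arbitrary monomials $u,v$ via the product decomposition $B(w)=\prod_i(x_1,\dots,x_i)^{c_i}$, so your caveat that the analogue ``can fail for more general pairs'' is not needed.
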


\begin{proof} Let $I=B(u)$,  then $\mu(I)=\mu(J)$, $\mu(I^2)=\mu(J^2)$, where $J=B(x_{2}^{a_{2}}x_{3}^{a_{3}})$. Note that
$$J=(x_{1},x_{2})^{a_{2}}(x_{1},x_{2},x_{3})^{a_{3}}=\sum\limits_{i=0}^{a_{3}}(x_{1},x_{2})^{a_{2}+i}(x_3)^i.$$
This implies that
\[
\mu(I)=\sum\limits_{i=0}^{a_{3}}{a_{2}+1+i\choose a_{2}+i}=\sum\limits_{i=0}^{a_{3}}(a_{2}+1+i)=a_{2}(a_{3}+1)+{a_{3}+2\choose 2}.
\]
Similarly, \[
\mu(I^2)=\sum\limits_{i=0}^{2a_{3}}{2a_{2}+1+i\choose 2a_{2}+i}=\sum\limits_{i=0}^{2a_{3}}(2a_{2}+1+i)=2a_{2}(2a_{3}+1)+{2a_{3}+2\choose 2}.
\]
By Lemma \ref{analytic spread}, it follows that
\begin{eqnarray*}\Delta (I)&=&2a_{2}(2a_{3}+1)+{2a_{3}+2\choose 2}-3[a_{2}(a_{3}+1)+{a_{3}+2\choose 2}]+3\\
&=&a_{2}(a_{3}-1)+{a_{3}-1\choose 2}.
\end{eqnarray*}
Therefore,  $I$ is Freiman if and only if $a_{3}=1$,   or $a_{3}=2$ and $a_{2}=0$.
\end{proof}

\subsection{Hibi ideals}

Fix a field $K$,  and let $P$ be a finite poset. A poset $C$ is called a {\em chain}, if $C$ is totally ordered, that is if any two elements of $C$ are comparable.  By definition, the length of the chain $C$ is equal to $|C|-1$. The {\em rank of $P$}, denoted $\rank(P)$,  is the maximal length of a chain in $P$. We always have $\rank(P)\leq |P|-1$, and equality holds, if and only if $P$ itself is a chain.

Attached to $P$   we consider  a monomial ideal $H_P$ in the polynomial ring  $S=K[\{x_p,y_p\}_{p\in P}]$. This ideal is called the {\em Hibi ideal} of the poset $P$. To define $H_P$,  let $\mathcal{I}(P)$ be the set of poset ideals of $P$. Recall that a subset $I\subset P$ is called a {\em poset ideal} of $P$,  if for all $p,q\in P$ with $p\in P$ and $q\leq p$, it follows that $q\in P$.  Now the Hibi ideal associated to $P$ is the squarefree monomial ideal
\[
H_{P}=(\{u_{I}\}_{I\in \mathcal{I}(P)}),  \quad \text{where}\quad  u_{I}=(\prod\limits_{p\in \mathcal{I}(P)}x_p)(\prod\limits_{p\in P\setminus \mathcal{I}(P)}y_p).
\]
Note that each $u_I$ is a
 squarefree monomial of  degree $|P|$.

\begin{Theorem}
\label{hibifreiman}
Let $P$ be a finite poset. Then $H_P$ is  Freiman   if and only if there exists $p\in P$ such that the subposet $P\setminus \{p\}$ of $P$  is a chain.
\end{Theorem}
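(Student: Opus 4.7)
My plan is to identify the fiber cone $F(H_P)$ with the Hibi ring $R_K[\mathcal{L}]$ of the distributive lattice $\mathcal{L}=\mathcal{I}(P)$, a classical toric $K$-algebra. By Hibi's theorem, $R_K[\mathcal{L}]$ is Cohen--Macaulay of Krull dimension $|P|+1$; its embedding dimension equals $\mu(H_P)=|\mathcal{I}(P)|$ and its multiplicity equals the number $e(P)$ of linear extensions of $P$. Thus Proposition~\ref{hyper} reduces the Freiman question to the minimal multiplicity condition
\[
|\mathcal{I}(P)|\ =\ |P|+e(P).
\]
Equivalently, after fixing a natural labeling $\omega\colon P\to\{1,\ldots,n\}$, the $h$-vector of $R_K[\mathcal{L}]$ is the descent polynomial $\sum_\sigma t^{\mathrm{des}(\sigma)}$ over linear extensions of $(P,\omega)$, and the Cohen--Macaulay argument used in the proof of Proposition~\ref{hyper} shows that $h_2=0$ is equivalent to requiring every linear extension of $(P,\omega)$ to have at most one descent.

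For the ``if'' direction, I suppose $P\setminus\{p\}$ is the chain $c_1<c_2<\cdots<c_{n-1}$, with $p$ above $c_1,\ldots,c_a$ and below $c_b,\ldots,c_{n-1}$ for some $0\le a<b\le n$. Counting poset ideals according to whether $p\in I$ gives $|\mathcal{I}(P)|=b+(n-a)=|P|+(b-a)$, while inserting $p$ into one of the $b-a$ allowable slots $\{a,\ldots,b-1\}$ of the chain produces $e(P)=b-a$ linear extensions, so the equality $|\mathcal{I}(P)|=|P|+e(P)$ holds. Moreover, a direct inspection of the two transitions $(c_k,p)$ and $(p,c_{k+1})$ in a suitable natural labeling confirms that every such extension has at most one descent.

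For the ``only if'' direction, I assume no $P\setminus\{p\}$ is a chain and produce a linear extension with at least two descents. First, if $P$ contains a three-element antichain $\{a,b,c\}$ with $\omega(a)<\omega(b)<\omega(c)$, then adjoining the relations $c<b<a$ to $P$ creates no cycle (no two of $a,b,c$ can be connected by a directed $P$-path, as otherwise they would be comparable in $P$), so the augmented poset admits a linear extension, which is also a linear extension of $P$ placing $c$ before $b$ before $a$. A telescoping argument---any word whose label values drop overall must contain a consecutive descent---applied to the two segments between these elements yields the required two descents.

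The remaining case is width $2$. By Dilworth's theorem, $P=C_1\cup C_2$ is a union of two chains, all incomparabilities occur between $C_1$ and $C_2$, and the hypothesis forces the bipartite incomparability graph to be non-star, hence to contain a matching of size two: disjoint incomparable pairs $\{u_1,v_1\},\{u_2,v_2\}$ with $u_i\in C_1$, $v_i\in C_2$ and $u_1<u_2$. Using $u_2\parallel v_2$ together with $u_1<u_2$ to rule out $u_1>v_2$ (and analogously on the other side), the four-element subposet falls into one of two configurations according to the order of $v_1,v_2$ in $C_2$; in each case I can choose a natural labeling of $P$ for which a zigzag block ($v_1,u_1,v_2,u_2$ or $v_2,u_1,v_1,u_2$) is a valid sub-sequence exhibiting exactly two descents. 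Completing this block to a linear extension of the full poset is possible because any element that would be forced between $u_i$ and $v_i$ in the sequence would witness $u_i<v_i$ or $v_i<u_i$, contradicting $u_i\parallel v_i$. I anticipate this last extension step to be the main obstacle, since it requires showing uniformly that all remaining elements of $P$ can be placed without disturbing the two engineered adjacencies regardless of the extra comparabilities that may link the two chains.
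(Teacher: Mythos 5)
Your proposal takes a genuinely different route from the paper. The paper also reduces, via Cohen--Macaulayness of $F(H_P)$, to showing that the $h$-polynomial has degree $\leq 1$, but it does so in one stroke by invoking the formula $a(F(H_P))=-\rank(P)-2$ from \cite{EHM} (together with $\ell(H_P)=|P|+1$), which immediately gives: $H_P$ is Freiman iff $\rank(P)\geq |P|-2$, and the latter is trivially equivalent to ``$P\setminus\{p\}$ is a chain for some $p$.'' You instead identify the multiplicity of $F(H_P)$ with the number $e(P)$ of linear extensions and the $h$-polynomial with the $(P,\omega)$-descent polynomial, turning the question into: does some linear extension have two or more descents? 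This is an attractive and more elementary framing, and your ``if'' direction (the count $|\mathcal I(P)|=b+(n-a)=|P|+e(P)$ plus the observation that a chain-with-one-element-inserted extension has at most one descent) is correct and pleasantly explicit. The disadvantage shows in the ``only if'' direction: the paper's $a$-invariant argument packages the statement ``$\deg h_{F(H_P)}=|P|-1-\rank(P)$'' and avoids any hand construction, whereas your route requires you to actually exhibit an extension with two descents when no $P\setminus\{p\}$ is a chain. The width $\geq 3$ case (three-element antichain, telescoping argument) is fine, but the width-$2$ case — producing a matching of size two in the incomparability graph, choosing a compatible natural labeling with $\omega(u_i)<\omega(v_i)$, and then completing the zigzag $v_1,u_1,v_2,u_2$ (or $v_2,u_1,v_1,u_2$) to a linear extension in which the two engineered adjacencies survive — is only sketched, and as you note yourself the completion step is the real work: you must verify simultaneously that the augmented relations create no cycle, that the four elements can be made consecutive (or that the descents persist after interpolation), and that the labeling can be chosen uniformly. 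This is a genuine gap in the written argument, though I believe the approach is salvageable; a cleaner repair would be to prove directly that a natural-labeled poset of width $w$ admits a linear extension with $|P|-1-\rank(P)$ descents (e.g.\ by inserting the elements outside a fixed maximum chain one at a time), which is exactly the combinatorial content of the $a$-invariant formula the paper cites.
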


\begin{proof}
By Remarks \ref{useful}(a) it suffices to show that $h_2=0$. Since $F(H_P)$ is Cohen--Macaulay (see  \cite{Hi}), it follows  that $h_2=0$ if and only if $h_i=0$ for all $i\geq 2$. Therefore, $h_2=0$ if and only if for the $a$-invariant of $F(H_P)$ we have $a(F(H_P))\leq 1-\ell(H_P)$. It is known that  $\ell(H_P)=|P|+1$  and that $a(F(H_P))=-\rank P-2$ (see \cite{EHM}). It follows that $H_P$ is a Freiman ideal if and only if $\rank(P)\geq |P|-2$.

Now suppose that there exists $p\in P$ such that $P\setminus \{p\}$ is a chain. Then $|P|-2=\rank(P\setminus \{p\})\leq \rank(P)$. Therefore, $H_P$ is Freiman.

Conversely, if $H_P$ is Freiman, then $\rank(P)\geq |P|-2$. Suppose $P\setminus \{p\}$ is not a chain for any $p\in P$. Then $P$ is not a chain. Let $C\subset P$ be a chain with  $\rank(C)=\rank(P)$. Since $P$ is not a chain, there exists $p\in P\setminus C$. Then
\[
|P|-2=|P\setminus \{p\}|-1\geq \rank(P\setminus\{p\})=\rank(P)\geq |P|-2.
\]
Therefore, $|P|=\rank(P)+2$. This means that $P=C\union \{q\}$ with $q\not \in C$. Then $P\setminus\{q\}$ is a chain, a contradiction.
\end{proof}

\subsection{Ideals of Veronese type}

Given positive integers  $n$, $d$ and a sequence $\ab$ of integers $1\leq a_1\leq a_2\leq \cdots \leq a_n\leq d$ with $\sum_{i=1}^na_i>d$, one defines the monomial ideal  $I_{\ab,d}\subset S=K[x_1,\ldots,x_n]$ with
\[
G(I_{\ab,d})=\{x_1^{b_1}x_2^{b_2}\cdots x_n^{b_n}\; \mid \; \sum_{i=1}^nb_i=d \text{ and  $b_i\leq a_i$ for $i=1,\ldots,n$}\}.
\]
By \cite[Corollary 2.2]{NH},   $F(I_{\ab,d})$ is a Cohen--Macaulay of dimension $n$.

\medskip
If $a_i=d$  for $i=1,\ldots,n$, then $I_{\ab,d}=(x_1,\ldots,x_n)^d$, and we have seen in Theorem~\ref{lexsegment} that $(x_1,\ldots,x_n)^d$ is Freiman if and only if  $n\leq 2$, $d=1$  or $n=3$ and  $d=2$.

\medskip
If $a_i=1$ for $i=1,\ldots,n$, then $I_{\ab,d}$ is the so-called squarefree Veronese ideal which we also denote $I_{n,d}$. In this case we have

\begin{Theorem}\label{Veronese2}
  $I_{n,d}$ is  Freiman  if and only if  $d=1$,  $d=n-1$.
\end{Theorem}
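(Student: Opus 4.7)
The strategy combines the structural implication from Section~\ref{sec1} (Freiman together with Cohen--Macaulay fiber cone forces levelness) with De Negri and Hibi's classification of level squarefree Veronese ideals, reducing the problem to a short finite list of numerical cases.

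For the ``if'' direction, the case $d=1$ follows from Theorem~\ref{lexsegment}. For $d=n-1$, the $n$ generators are $u_i=\prod_{j\neq i}x_j$, and the products $u_iu_j=(x_1x_2\cdots x_n)^2/(x_ix_j)$ for $1\leq i\leq j\leq n$ are pairwise distinct; combined with $\ell(I_{n,n-1})=n$, this yields
\[
\mu(I_{n,n-1}^2)=\binom{n+1}{2}=n\cdot n-\binom{n}{2},
\]
which is precisely the Freiman equality.

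For the converse, suppose $I_{n,d}$ is Freiman. By \cite{NH}, $F(I_{n,d})$ is Cohen--Macaulay of dimension $n$, so Corollary~\ref{level} forces it to be level. The De Negri--Hibi classification of levelness quoted in the introduction then restricts $(n,d)$ to $d\in\{1,n-1\}$ or $d\geq 2$ with $n\in\{2d-1,2d,2d+1\}$. The overlap with $d=n-1$ occurs only at $(n,d)=(3,2)$, so it remains to exclude the families $n=2d-1$ with $d\geq 3$, $n=2d$ with $d\geq 2$, and $n=2d+1$ with $d\geq 2$.

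The final step is a direct computation of $\Delta(I_{n,d})$ in these three families. A Gale--Ryser-type argument shows that $G(I_{n,d}^2)$ consists of exactly the monomials $x_1^{b_1}\cdots x_n^{b_n}$ of degree $2d$ with $b_i\in\{0,1,2\}$ (split the coordinates with $b_i=2$ into the common part of the two $d$-subsets, and split the coordinates with $b_i=1$ equally into the two symmetric differences), so
\[
\mu(I_{n,d}^2)=[y^{2d}](1+y+y^2)^n=\sum_{k\geq 0}(-1)^k\binom{n}{k}\binom{n+2d-3k-1}{n-1}.
\]
One then verifies $\Delta(I_{n,d})=\mu(I_{n,d}^2)-n\binom{n}{d}+\binom{n}{2}>0$ in each of the three families. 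Since only a few terms of the alternating sum are nonzero, each case reduces to an elementary binomial identity; the main obstacle is to establish strict positivity uniformly in $d$ in the required range, which will likely require either a case analysis of small $d$ (e.g.\ the values $(n,d)=(4,2),(5,3),(6,3),(7,3)$ give $\Delta=1,5,36,133$) together with a monotonicity estimate, or a uniform lower bound for $\Delta$ extracted directly from the expression above.
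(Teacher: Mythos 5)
Your overall strategy coincides with the paper's: the ``if'' direction comes down to the cases $d=1$ (Theorem~\ref{lexsegment}) and $d=n-1$, and for the converse you invoke the Cohen--Macaulayness of $F(I_{n,d})$, Corollary~\ref{level}, and the De Negri--Hibi levelness classification from \cite{HHV} to reduce to a short list of $(n,d)$ families, which must then be shown to have $\Delta(I_{n,d})>0$. The paper shortens the casework by noting $F(I_{n,d})\iso F(I_{n,n-d})$, so only $n=2d$ and $n=2d+1$ with $d\ge 2$ need checking; your list also contains $n=2d-1$, which is equivalent by this symmetry, so you are doing one redundant family. Your identification of $G(I_{n,d}^2)$ with the degree-$2d$ monomials having all exponents in $\{0,1,2\}$ is correct and equivalent to the paper's count $\mu(I_{n,d}^2)=\sum_{i=0}^d\binom{n}{2i}\binom{n-2i}{d-i}$ by intersection size.

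However, there is a genuine gap: the final and decisive step, establishing $\Delta(I_{n,d})>0$ in the remaining families, is only sketched, and you explicitly acknowledge that it ``will likely require'' further estimates. In addition, your remark that ``only a few terms of the alternating sum are nonzero'' is false: in $\sum_{k\ge 0}(-1)^k\binom{n}{k}\binom{n+2d-3k-1}{n-1}$ the terms survive for all $0\le k\le \lfloor 2d/3\rfloor$, so the number of nonzero terms grows linearly with $d$, and the alternating form makes positivity harder, not easier. The paper instead works with the manifestly nonnegative expansion $\sum_i\binom{n}{2i}\binom{n-2i}{d-i}$, peels off the $i=0,1$ terms, and rewrites
\[
\Delta(I_{2d,d})=\binom{2d-1}{d}(d^2-4d+2)+\sum_{i=2}^d\binom{2d}{2i}\binom{2d-2i}{d-i}+\binom{2d}{2},
\]
and similarly for $n=2d+1$, from which positivity follows (with a short check that the remaining nonnegative terms dominate when $d^2-4d+2<0$, i.e.\ $d=2,3$). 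To complete your proof you would need to replace the alternating-sum heuristic with such a positive decomposition, or supply a genuine monotonicity/lower-bound argument; as written the converse direction is not finished.
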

\begin{proof} Since $F(I_{n,d})\iso F(I_{n,n-d})$ for all $1\leq d\leq n-1$, it follows that $I_{n,d}$ is Freiman if and only if $I_{n,n-d}$ is Freiman.
If $d=1$, then $I_{n,d}=(x_1,\ldots,x_d)$ is Freiman by Theorem \ref{lexsegment}. Therefore,  $I_{n,d}$
is also Freiman for $d=n-1$.

\medskip
Conversely,  assume  that  $I_{n,d}$  is Freiman.  It is known from \cite[Corollary 2.2]{NH} that $F(I_{n,d})$ is Cohen--Macaulay. Therefore, Corollary~\ref{level} implies that $I_{n,d}$ is level. By \cite[Theorem  2.1]{HHV}, this implies that  $d=1$, $d=n-1$,  or $d\geq 2$ and $n=2d-1$, $n=2d$ or $n=2d+1$.
 Since $F(I_{n,d})\iso F(I_{n,n-d})$ for all $1\leq d\leq n-1$,  we may assume that  $n\geq 2d$,  and it is enough to show that if $d\geq 2$ and $n=2d$ or $n=2d+1$, then $I_{n,d}$ is not Freiman.

To see this, first observe that
\[
\mu(I_{n,d})={n\choose d}\ \text{and}\ \mu(I_{n,d}^2)=\sum\limits_{i=0}^{d}{n\choose 2i}{n-2i\choose d-i}.
\]
The first equation is obvious. For the second  equation, we notice that  ${n\choose 2i}{n-2i\choose d-i}$ counts the number of monomials
$(x_{i_1}\cdots x_{i_d})(x_{j_1}\cdots x_{j_d})\in I_{n,d}^2$ with $$|\{i_1,\ldots,i_d\}\cap \{j_1,\ldots,j_d\}|=i.$$

By \cite[Corollary 2.2]{NH},   $\ell(I_{n,d})=n$ for any $1\leq d\leq n-1$, it follows that
 $$\Delta(I_{n,d})=\mu(I_{n,d}^2)-n\mu(I_{n,d})+{n\choose 2}.$$

If $d\geq 2$ and $n=2d$, then
\begin{eqnarray*}
\Delta(I_{2d,d})&=&\mu(I_{2d,d}^2)-2d\mu(I_{2d,d})+{2d\choose 2}\\
&=&{2d\choose d}+\sum\limits_{i=1}^{d}{2d\choose 2i}{2d-2i\choose d-i}-2d{2d\choose d}+{2d\choose 2}\\
&=&{2d\choose d}+{2d\choose 2}{2d-2\choose d-1}-2d{2d\choose d}+\sum\limits_{i=2}^{d}{2d\choose 2i}{2d-2i\choose d-i}+{2d\choose 2}\\
&=&{2d-1\choose d}(d^2-4d+2)+\sum\limits_{i=2}^{d}{2d\choose 2i}{2d-2i\choose d-i}+{2d\choose 2}\\
&>&0.
\end{eqnarray*}
It follows that $I_{2d,d}$ is not Freiman.

If $d\geq 2$ and $n=2d+1$, then
\begin{eqnarray*}
\Delta(I_{2d+1,d})&=&\mu(I_{2d+1,d}^2)-(2d+1)\mu(I_{2d+1,d})+{2d+1\choose 2}\\
&=&{2d+1\choose d}+\sum\limits_{i=1}^{d}{2d+1\choose 2i}{2d+1-2i\choose d-i}-(2d+1){2d+1\choose d}+{2d+1\choose 2}\\
&=&{2d+1\choose d}+{2d+1\choose 2}{2d-1\choose d-1}-(2d+1){2d+1\choose d}\\
&+&\sum\limits_{i=2}^{d}{2d+1\choose 2i}{2d+1-2i\choose d-i}+{2d+1\choose 2}\\
\end{eqnarray*}
\begin{eqnarray*}
&=&\frac{(2d+1)(d-3)}{2}{2d\choose d-1}+\sum\limits_{i=2}^{d}{2d+1\choose 2i}{2d+1-2i\choose d-i}+{2d+1\choose 2}\\
&>&0.
\end{eqnarray*}
This implies that $I_{2d+1,d}$ is not Freiman.
\end{proof}

We conclude this paper with the classification of all Freiman ideals for  another family of ideals of Veronese type.

\begin{Theorem}\label{Veronese3}
 Let $d, n\geq 2$ be positive integers and  $a_i=d-1$ for $i=1,\ldots,n$, then $I_{\ab,d}$ is  Freiman  if and only if $n=2$, or $n=3$ and  $d=2$.
\end{Theorem}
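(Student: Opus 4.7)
The plan is to exploit the Cohen--Macaulay property of $F(I_{\ab,d})$, established by De~Negri and Hibi, together with Katzman's explicit multiplicity formula, and reduce the Freiman question to a single numerical identity via Proposition~\ref{hyper}. By \cite[Corollary 2.2]{NH}, $F(I_{\ab,d})$ is Cohen--Macaulay of Krull dimension $n$, so Proposition~\ref{hyper} tells us that $I_{\ab,d}$ is Freiman if and only if $F(I_{\ab,d})$ has minimal multiplicity; equivalently, $\embdim F(I_{\ab,d}) = e(F(I_{\ab,d})) + n - 1$.

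Because $a_i = d - 1$ for all $i$, the generators of $I_{\ab,d}$ are precisely the monomials of degree $d$ in $x_1, \ldots, x_n$ other than the pure powers $x_1^d, \ldots, x_n^d$, so $\embdim F(I_{\ab,d}) = \mu(I_{\ab,d}) = \binom{n+d-1}{n-1} - n$, while Katzman's formula (recalled in the introduction) gives $e(F(I_{\ab,d})) = d^{n-1} - n$. Substituting into the minimal multiplicity condition yields the identity
$$\binom{n+d-1}{n-1} = d^{n-1} + n - 1. \qquad (\star)$$
Moreover, Cohen--Macaulayness of $F(I_{\ab,d})$ supplies the a priori bound $\binom{n+d-1}{n-1} \leq d^{n-1} + n - 1$, so only equality in $(\star)$ has to be decided.

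Next I would check $(\star)$ case by case. For $n = 2$ both sides equal $d + 1$, so $(\star)$ holds for every admissible $d$. For $n = 3$, $(\star)$ becomes $(d+1)(d+2)/2 = d^2 + 2$, which factors as $(d-1)(d-2) = 0$ and forces $d = 2$ under $d \geq 2$. The only remaining task, which will be the main obstacle, is to establish the strict inequality $\binom{n+d-1}{n-1} < d^{n-1} + n - 1$ for all $n \geq 4$ and $d \geq 2$.

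I would tackle this by induction on $d$. The base case $d = 2$ reduces to $(n^2 - n + 2)/2 < 2^{n-1}$, which holds for $n \geq 4$ by a standard induction on $n$. For the inductive step, setting $F(n,d) := d^{n-1} + n - 1 - \binom{n+d-1}{n-1}$ and using Pascal's rule, one obtains
$$F(n, d+1) - F(n, d) = (d+1)^{n-1} - d^{n-1} - \binom{n+d-1}{n-2}.$$
Positivity of the right-hand side for $n \geq 3$ and $d \geq 2$ follows from the elementary estimates $(d+1)^{n-1} - d^{n-1} \geq (n-1) d^{n-2}$ and $\binom{n+d-1}{n-2} \leq (n-1) d^{n-2}$; the second one reduces by monotonicity in $d$ of $\prod_{k=2}^{n-1}(1+k/d)$ to the case $d=2$, namely the inequality $n(n+1) \leq 6\cdot 2^{n-2}$, which is a straightforward induction. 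Hence $F(n,\cdot)$ is strictly increasing on $d \geq 2$ for $n \geq 4$, propagating the base case $d = 2$ to every $d \geq 2$ and completing the proof.
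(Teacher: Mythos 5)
Your proof is correct and follows the paper's strategy exactly: Proposition~\ref{hyper} together with Katzman's multiplicity formula reduces the Freiman condition to the numerical identity $\binom{n+d-1}{n-1}=d^{n-1}+n-1$, which you and the paper both verify directly for $n=2$, factor as $(d-1)(d-2)/2=0$ for $n=3$, and rule out by a strict-inequality argument for $n\geq 4$. The only difference is the final inductive step: the paper inducts on $n$ with base case $n=4$, whereas you induct on $d$ with base case $d=2$, using Pascal's rule and the elementary bounds $(d+1)^{n-1}-d^{n-1}>(n-1)d^{n-2}\geq\binom{n+d-1}{n-2}$; both arguments are valid and of comparable length.
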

\begin{proof} In  \cite[Corollary 2.11]{Ka}, Katzman showed  that the multiplicity   of $F(I_{\ab,d})$ is given by the fomula
\[
e(F(I_{\ab,d}))=\sum\limits_{S\in \Mc}(-1)^{|S|}(d-\sum_{i\in S}a_i)^{n-1},
\]
where $\Mc$ is the set of subsets $S$ of $\{1,\dots,n\}$  with $\sum_{i\in S}a_{i}<d$.

Since we assume that  $a_i=d-1$ for $i=1,\ldots,n$, it follows from the above formula by Katzman that
 \[
 e(F(I_{\ab,d}))=d^{n-1}-n.
 \]

 Since $F(I_{\ab,d})$ is Cohen--Macaulay, Proposition~\ref{hyper} implies $I_{\ab,d}$ is  Freiman if and only if  $F(I_{\ab,d})$ has minimal multiplicity.  This is the case   if and only if
\[
\mu(I_{\ab,d})= e(F(I_{\ab,d}))+\dim(F(I_{\ab,d}))-1.
\]
Since  $\dim(F(I_{\ab,d}))=n$ (see  \cite[Corollary 2.2]{NH}), this is equivalent to the equation
\[
{n+d-1\choose d}-n=d^{n-1}-1.
\]
Let $f(n,d)=d^{n-1}-{n+d-1\choose d}+n-1$. Then   $f(n,d)=0$,   if and only if  $I_{\ab,d}$ is Freiman.

Note that $$f(2,d)=d-{d+1\choose d}+2-1=0,$$
and
$$f(3,d)=d^2-{d+2\choose d}+3-1=\frac{(d-1)(d-2)}{2}.$$
Now, we prove that if $n\geq 4$, then $f(n,d)>0$ for any $d\geq 2$.
We  prove this by induction on $n$.
$$f(4,d)=d^3-{d+3\choose d}+4-1=\frac{(d-1)(5d^2-d-12)}{6}>0,$$

and $f(n,d)=d^{n-1}-{n+d-1\choose d}+n-1>0$, by induction hypothesis. Then
  \begin{eqnarray*}
  f(n+1,d)&=&d^{n}-{n+d\choose d}+n\\
  &>&d[{n+d-1\choose d}-n+1]-{n+d\choose d}+n\\
  &=&(d-1){n+d-1\choose d}-{n+d-1\choose d-1}-n(d-1)+d\\
  &=&\frac{\prod\limits_{i=1}^{d-1}(n+i)-d!}{d!}(n(d-1)-d).
   \end{eqnarray*}
  Since $\sum_{i=1}^na_i>d$ and $a_i=d-1$ for $i=1,\ldots,n$, it follows that $n(d-1)-d>0$. Therefore,
   $f(n,d)>0$ for any $d\geq 2$, and hence  $f(n,d)=0$ if and only if $n=2$, or $n=3$ and  $d=2$.
\end{proof}

\medskip
\noindent
{\bf Acknowledgement.}  This paper is supported by the National Natural Science Foundation of
China (11271275) and by the Foundation of the Priority Academic Program Development of Jiangsu Higher Education Institutions.

\end{document}